\documentclass{elsarticle}
\usepackage{amsmath}
\usepackage{amsthm}
\usepackage{mathrsfs}

\usepackage{tikz}
\usepackage{verbatim}
\usepackage{enumitem}
\usepackage{amssymb}
\usetikzlibrary{matrix}
\newtheorem{thm}{Theorem}[section]
\newtheorem{prop}[thm]{Proposition}
\newtheorem{lem}[thm]{Lemma}
\newtheorem{cor}[thm]{Corollary}
\newtheorem{qn}[thm]{Question}

\theoremstyle{definition}
\newtheorem{definition}[thm]{Definition}
\newtheorem{example}[thm]{Example}

\theoremstyle{remark}
\newtheorem{remark}[thm]{Remark}

\numberwithin{equation}{section}


\begin{document}

\title{The radical-annihilator monoid of a ring} 

\author{Ryan C. Schwiebert}

\address{360fly\\ 1000 Town Center Way Ste. 200\\ Pittsburgh, PA 15317}
\address{Center of Ring Theory and its Applications\\ 
321 Morton Hall
Ohio University\\ Athens, OH 45701}
\ead[R.~Schwiebert]{ryan.c.schwiebert@gmail.com}

\begin{abstract}
  Kuratowski's closure-complement problem gives rise to a monoid generated by the closure and complement operations. Consideration of this monoid yielded an interesting classification of topological spaces, and subsequent decades saw further exploration using other set operations. This article is an exploration of a natural analogue in ring theory: a monoid produced by ``radical'' and ``annihilator'' maps on the set of ideals of a ring. We succeed in characterizing semiprime rings and commutative dual rings by their radical-annihilator monoids, and we determine the monoids for commutative local zero-dimensional (in the sense of Krull dimension) rings.
\end{abstract}

\begin{keyword}annihilators, radicals, Kuratowski closure-complement problem, semiprime rings, dual rings, ordered monoids
\MSC{16D99, 16N60, 06F05}
\end{keyword}

\maketitle

\section{Introduction}
\label{intro}

Kuratowski famously showed in \cite{KURA22} that given a subset of a topological space, it is possible to make at most $14$ distinct sets from this subset by using the closure and  complement operations. This led to the consideration of the ``Kuratowski monoid'' of a space, where the two operations are looked upon as generators for a monoid of functions on the powerset of the space. The monoid formed this way can only be one of six particular types depending on the topology.

There are actually two problems concealed here. One is ``how many elements does the monoid have for a given space?'' and the other is ``what is the maximum number of distinct sets one can make with a given subset?'' In general, the monoid for a space may be strictly larger than the number of different sets producible from a single input.

In the literature, many variations on Kuratowski's problem have been carried out with other set operations in topological spaces with similar interesting results. For example, the closure and complement operators have also been mixed in various combinations with the interior operator, intersection, and union. Solutions to these variations appear comprehensively in \cite{GardJack08} and \cite{Sherman_2010}, where the authors organize, enhance, and add to earlier work in primary sources including \cite{Jackson_2004}, \cite{Jackson_Stokes_2004}, \cite{Soltan_1982}, \cite{Yu_1978} and \cite{Zarycki_1927}.

Further work has been done using ``generalized closure operators'' which are more general than topological closures in the sense that they do not necessarily distribute over finite unions. This gives rise to a generalized interior and the same problems tackled in the topological case. Early work began in \cite{Hammer_1960} and was followed by \cite{Coleman_1967}, \cite{Shum_1996}, and \cite{Soltan_1981}. More recently, two closure operators with complementation were considered in \cite{Shallit_2011}. Contexts to which this can be applied include formal languages (\cite{Brzozowski_2011}, \cite{Peleg_1984}), operators on binary relations (\cite{Fishburn_1978}, \cite{Graham_1972}), and examples from universal algebra (\cite{Anusiak_1971}, \cite{Comer_1972}, \cite{Nelson_1967}, \cite{Pigozzi_1972}).

In the category of rings, the map sending an ideal to its radical and the map sending an ideal to its annihilator are natural candidates for replacement of closure and complementation. While the radical map qualifies as a generalized closure operator, the work here is not subsumed by the material cited above since the replacement for complementation is not the identity when applied twice. In fact, even when the annihilator map \emph{does} satisfy the condition that two applications are the identity, the annihilator map can still have a different character from complementation: for example, the map can have a fixed point.

Since the annihilator map (and another map called the dualradical map which will be considered later) has properties extending those of the complementation map, it is justifiable to think of it as a ``generalized complementation operator'' just as previous work considered generalized closure operators.
 
This article will investigate the monoids generated by the radical and annihilator operations, will demonstrate differences from the original Kuratowski problem, and will seek to reconcile properties of the monoid with properties of the ring. 

Section \ref{sec:BACKGROUND} is a brief review of the ideas surrounding the Kuratowski monoid and a segue to the radical and annihilator maps. 

Section \ref{sec:PRODUCTS} establishes how to analyze the radical-annihilator monoids of finite direct products of rings. Section \ref{sec:TYPES} determines the monoids of three types of rings: commutative local dual rings, possibly noncommutative semiprime rings and prime rings, and commutative local zero-dimensional rings. The results for the lattermost type cover perfect rings and Artinian rings. 

Section \ref{sec:DUALRAD} studies the properties of the dualradical map, another generalized complement operator. The monoid generated by the radical and dualradical maps is determined for all rings.

Throughout the paper, rings will be assumed to have identity, but will not always be commutative.

\section{Background}\label{sec:BACKGROUND}
\subsection{The Kuratowski monoid}
For a broad look at the Kuratowski $14$-set problem and many related results, see Gardner \& Jackson's article \cite{GardJack08}. There is also an extensive online compilation devoted to Kuratowski-type problems maintained by Bowron \cite{Cornucopia}. 

The (topological) closure and (set) complement operations, denoted here by $k$ and $c$ respectively, act on the subsets of a topological space $X$ partially ordered by inclusion. The two mappings generate a submonoid of the monoid of all mappings from the powerset of $X$ to the powerset of $X$, and this monoid is usually called the $\textbf{Kuratowski monoid}$ of the topological space.

We note the following properties:

\begin{itemize}
  \item If $A\subseteq B$, then $k(A)\subseteq k(B)$; ($k$ is monotone)
  \item $A\subseteq k(A)$; ($k$ is extensive)
  \item $kk=k$; ($k$ is idempotent)
  \item If $A\subseteq B$, then $c(A)\supseteq c(B);$ ($c$ is antitone)
  \item $cc(A)=A$. ($c$ is an involution)
\end{itemize}

Two of the points provide relations in the monoid ($kk=k$ and $cc=1$). Two other points deal with preserving or reversing order. Since these two generators behave this way, we see that the Kuratowski monoid is actually a submonoid of order-reversing-or-preserving poset maps on the powerset of $X$. Combining these properties, it turns out that there is another nontrivial relation in the form
$kckckck=kck$. It is this relation that forces the size of the monoid generated by $c$ and $k$ to remain finite. 

Proving this extra relation involves the monotone and antitone properties of the maps: the algebraic relations alone are not enough. Rather than casting $k$ and $c$ merely as morphisms in the category of sets, it seems more accurate to view them instead as morphisms in a category whose objects are posets and whose morphisms are all order-reversing functions and order-preserving functions between posets. Considering monoids generated by arbitrary set functions is probably too ambitious, and concentrating on the order-reversing and order-preserving ones seems to be a more natural setting for questions like the closure-complement problem.

To visualize the monoid in this category, one can draw Hasse diagrams as in \cite{GardJack08}. The partial order on the elements of the monoid is the natural one on the set of functions between posets: $f\leq g$ means $f(x)\leq g(x)$ for all $x$ in the poset.

\begin{figure}[ht!]
\begin{center}
\begin{tikzpicture}[description/.style={fill=white,inner sep=2pt}]
\matrix (m) [matrix of math nodes, row sep=1.5em,
column sep=0.3em, text height=1.5ex, text depth=0.25ex]
{ &&\pmb{k}&&\ \ &&kc&& \\
  &&\pmb{kckck}&&&&kckckc&&\\
  \pmb{1}&\pmb{ckck}&&\pmb{kckc}&&ckckc&&kck&c \\
  &&\pmb{ckckckc}&&&&ckckck&& \\ 
  &&\pmb{ckc}&&\ \ &&ck&&\\
  &&&&&&&& \\};

\path[-] (m-1-3) edge (m-3-1)
         (m-3-1) edge (m-5-3)
         (m-1-3) edge (m-2-3)
         (m-4-3) edge (m-5-3)
         (m-2-3) edge (m-3-2)
         (m-2-3) edge (m-3-4)
         (m-3-2) edge (m-4-3)
         (m-3-4) edge (m-4-3)
         
         (m-1-7) edge (m-3-9)
         (m-1-7) edge (m-2-7)
         (m-3-9) edge (m-5-7)
         (m-4-7) edge (m-5-7)
         (m-2-7) edge (m-3-6)
         (m-2-7) edge (m-3-8)
         (m-3-6) edge (m-4-7)
         (m-3-8) edge (m-4-7)

;
\end{tikzpicture}
\caption{The full Kuratowski monoid having relations $c^2=1$; $k^2=k$; $kckckck=kck$. The operators in bold are idempotent.}
\label{fig:KURA}
\end{center}
\end{figure}
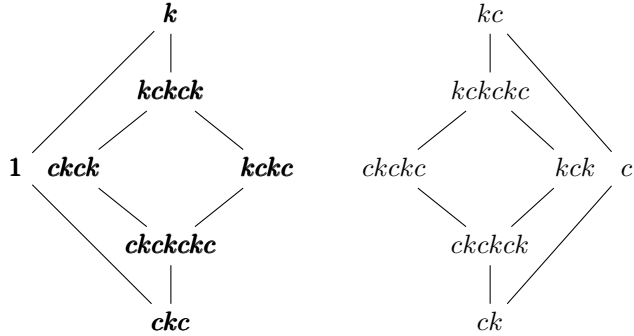

For example, the largest Kuratowski monoid has form depicted in Figure \ref{fig:KURA}. The remaining Kuratowski monoids are collapses of this diagram. There are a total of eight collapses that are algebraically possible, but actually only six of them can arise as the Kuratowski monoid of a topological space. The pair of maps $r$ and $a$ from ring theory suggested below yield new behavior.

\subsection{The radical and annihilator maps}
We assume the reader has knowledge of prime and radical ideals from commutative algebra, and review the noncommutative generalizations here.

\begin{definition}A proper ideal $P$ of a ring $R$ is called a \textbf{prime ideal} if for any pair of ideals $I$, $J$, $IJ\subseteq P$ implies $I\subseteq P$ or $J\subseteq P$. An ideal $Q$ of a ring $R$ is called a \textbf{semiprime ideal} if for any ideals $I$, $I^n\subseteq Q$ for some $n$ implies $I\subseteq Q$. \end{definition}

A nonzero ring is called a \textbf{prime ring} if its zero ideal is a prime ideal, and a ring is called a \textbf{semiprime ring} if its zero ideal is a semiprime ideal. A ring is called a \textbf{reduced ring} if it has no nonzero nilpotent elements. A reduced ring is always semiprime, but not conversely in general. The two notions coincide for commutative rings.

It should be noted that the definition of semiprime ideals does not exclude $R$ from being a semiprime ideal. The relationship between the two types of ideals is as follows.

\begin{prop}The proper semiprime ideals of a ring are exactly the intersections of nonempty sets of prime ideals. \end{prop}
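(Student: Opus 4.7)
The plan is to prove the two inclusions separately. For the forward direction, I would first verify the auxiliary fact that every prime ideal is semiprime: if $P$ is prime and $I^n \subseteq P$, then iterating primeness (either $I \subseteq P$ or $I^{n-1}\subseteq P$, then induct) yields $I \subseteq P$. Next I would show that any intersection of semiprime ideals is semiprime, since $I^n \subseteq \bigcap_\alpha Q_\alpha$ forces $I^n \subseteq Q_\alpha$, hence $I \subseteq Q_\alpha$, for every $\alpha$. Combining these two observations shows that an intersection of a nonempty family of prime ideals is semiprime, and it is proper because it is contained in any one of the (proper) prime ideals in the family.

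For the reverse direction, the goal is to show that a proper semiprime ideal $Q$ equals the intersection of the prime ideals that contain it. One inclusion is immediate. For the other, I would fix $x \in R \setminus Q$ and construct a prime ideal containing $Q$ that avoids $x$, following the standard noncommutative argument via $m$-systems (a set $M$ such that for all $a,b \in M$ there exist $r,s$ with $arbs$, or more commonly $arb$, in $M$). The key input is the ``n-system'' behavior of the complement of $Q$: since $Q$ is semiprime, $x \notin Q$ implies $\langle x \rangle^2 \not\subseteq Q$, so there exist $r_0, s_0 \in R$ with $x r_0 x s_0 \notin Q$ (or more simply, $xrx \notin Q$ for some $r$). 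Setting $x_0 = x$ and inductively $x_{n+1} = x_n r_n x_n$ with $x_{n+1} \notin Q$ at each stage, I obtain a countable $m$-system $M = \{x_0, x_1, \ldots\}$ disjoint from $Q$.

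Finally, Zorn's lemma produces an ideal $P$ that contains $Q$, is disjoint from $M$, and is maximal with respect to these two properties; a standard short argument shows that such $P$ must be prime (if $IJ \subseteq P$ while $I, J \not\subseteq P$, then $P + I$ and $P + J$ each meet $M$, giving elements $a = p_1 + i$ and $b = p_2 + j$ in $M$; the $m$-system condition then produces an element of $P$ inside $M$, a contradiction). Since $x = x_0 \in M$, we have $x \notin P$, completing the proof.

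The main obstacle is the backward direction, specifically the noncommutative bookkeeping: the usual ``multiplicatively closed set'' argument from commutative algebra is replaced by the $m$-system / $n$-system machinery, and one must be careful that semiprimeness of $Q$ translates into the correct closure property of $R \setminus Q$ to keep the inductive construction of $M$ inside the complement of $Q$.
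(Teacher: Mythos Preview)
The paper does not supply its own proof of this proposition; it is stated as a standard background fact from noncommutative ring theory and left unproved. Your argument is the classical one (essentially Levitzki's characterization, as found for instance in Lam's \emph{A First Course in Noncommutative Rings}), and it is correct.

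Two small points worth tightening. First, your labeling of the directions is reversed relative to the statement: what you call the ``forward direction'' is really the implication that a nonempty intersection of primes is proper and semiprime, i.e.\ the easy half. Second, in the substantive half you pass from the ideal-theoretic definition of semiprimeness ($I^n\subseteq Q\Rightarrow I\subseteq Q$) directly to the element-wise consequence ``$x\notin Q\Rightarrow xrx\notin Q$ for some $r$''. That step is true but is itself a short lemma: one shows that $xRx\subseteq Q$ forces $(RxR)^2\subseteq Q$, hence $RxR\subseteq Q$, hence $x\in Q$. Once that is in place, your inductive construction of the $m$-system $\{x_0,x_1,\ldots\}$ and the Zorn-maximal ideal argument go through exactly as you describe. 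The verification that $\{x_n\}$ really is an $m$-system (for $i\le j$ one has $x_j\in x_iRx_i$, so $x_j$ itself witnesses the $m$-system condition for the pair $x_i,x_j$) could also be spelled out, but you clearly have the right idea.
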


If we consider $R$ to be an empty intersection of prime ideals then we can drop the words ``proper'' and ``nonempty'' from the proposition to make the above statement a complete characterization of semiprime ideals.

\begin{definition}For a proper ideal $I$ of a ring $R$, we define the \textbf{(prime) radical} of $I$ to be the intersection of prime ideals containing $I$, and we denote it by $r(I)$. We let $r(R)=R$ for consistency. (If there is ambiguity about which ring the radical is taken to be in, we include the ring in a subscript of $r$.)\end{definition}

In noncommutative algebra, the radical of the zero ideal of $R$ ($r(\{0\})$ in notation) is also known as the lower nilradical or the Baer-McCoy radical and is denoted $Nil_\ast(R)$. For commutative rings, it is usually called the nilradical of $R$. The above definitions have stretched this usage of ``prime radical'' for use with ideals. 

For expediency, we may use $N(R)$ or simply $N$ to denote the prime radical of a ring. In light of the characterization of semiprime ideals, $r(I)$ is the smallest semiprime ideal containing $I$. In the commutative case, this boils down to the familiar set $\sqrt{I}:=\{r\in R\mid \exists n\in \mathbb{N}, r^n\in I\}$. It is also worth noting for later use that for any ideal $I\subseteq r(\{0\})$, we necessarily have that $r(I)=r(\{0\})$.

The second concept needed is the \emph{annihilator of an ideal}. In general, there is the right annihilator of an ideal $I$ in $R$ defined to be $\{r\in R\mid Ir=\{0\}\}$ and there is its left-hand counterpart. It turns out that the left and right annihilators of an ideal are both also ideals of $R$. However, working with left and right annihilators is unattractive for the purposes of the current article. We will sidestep this complexity by only considering rings in which the left and right annihilator of each ideal are the same set. This holds for commutative rings, of course, but it also holds for semiprime rings.

\begin{lem}(\cite{LAMLMR} Lemma 11.36)
The right and left annihilators of a two-sided ideal in a semiprime ring coincide.
\end{lem}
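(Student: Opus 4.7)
The plan is to show that the right annihilator of a two-sided ideal $I$ in a semiprime ring $R$ is contained in the left annihilator; the reverse containment follows by a symmetric argument.

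Let $x$ be in the right annihilator, so $Ix = \{0\}$. I want to produce a nilpotent two-sided ideal that contains $xI$, because a semiprime ring has no nonzero nilpotent ideals (indeed, if $J^2 = \{0\}$ then $J$ is a semiprime ideal's subideal whose square lies in $\{0\}$, so $J \subseteq r(\{0\}) \cap \{\text{ideals with } J^2=0\}$ forces $J=0$; more concretely, this is a standard equivalent form of the semiprime condition that the author can cite). The natural candidate is the two-sided ideal $J$ generated by $xI$.

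First I would simplify $J$. Because $I$ is a right ideal, $xIR \subseteq xI$, so $xI$ is itself a right ideal of $R$; since $R$ has identity, the two-sided ideal generated by $xI$ is simply $J = RxI$. Next I would compute
\begin{equation*}
J^2 = (RxI)(RxI) = Rx(IR)xI \subseteq Rx\,I\,xI = Rx(Ix)I = Rx\cdot\{0\}\cdot I = \{0\},
\end{equation*}
where I used that $I$ is a left ideal (so $IR \subseteq I$) in the second step and the hypothesis $Ix=\{0\}$ at the end. Thus $J$ is a two-sided ideal with $J^2 = \{0\}$, so by semiprimality $J = \{0\}$, and in particular $xI = \{0\}$, i.e., $x$ lies in the left annihilator of $I$.

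The main thing to be careful about is exploiting the two-sided nature of $I$ in exactly the right place (the step $IR \subseteq I$), since the argument genuinely breaks for one-sided ideals. There is no real obstacle beyond that; everything else is bookkeeping and the invocation of the semiprime characterization as the absence of nonzero nilpotent two-sided ideals.
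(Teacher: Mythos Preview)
Your argument is correct and is essentially the standard proof (the one found in Lam). The paper itself does not supply a proof of this lemma; it merely cites \cite{LAMLMR} Lemma 11.36, so there is nothing in the paper to compare against beyond noting that your write-up matches the referenced source.

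One small slip: in the display you justify $IR \subseteq I$ by saying ``$I$ is a left ideal,'' but that containment is precisely the \emph{right} ideal condition. Since $I$ is two-sided this does not affect the argument, but you should correct the label. You could also streamline slightly by observing that $xI$ is already a right ideal with $(xI)^2 = x(Ix)I = \{0\}$ and invoking the equivalent form of semiprimeness that forbids nonzero square-zero one-sided ideals; this avoids passing to $RxI$, though your version is perfectly fine.
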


\begin{definition}
Let $R$ be a ring in which the left and right annihilators of each two-sided ideal coincide. For an ideal $I$, the left (=right) annihilator of $I$ will simply be called the \textbf{annihilator}. The map sending $I$ to its annihilator will be denoted by $a$. (If there is ambiguity about which ring the annihilator is taken to be in, we include the ring in a subscript of $a$.)
\end{definition}

The maps $r$ and $a$ now act on the poset of \emph{ideals} of a ring $R$ partially ordered by inclusion. For comparison with the properties of the closure and complement maps, we list similar properties of the radical and annihilator maps where $A$ and $B$ are ideals:

\begin{itemize}
  \item If $A\subseteq B$, then $r(A)\subseteq r(B)$; ($r$ is monotone)
  \item $A\subseteq r(A)$; ($r$ is extensive)
  \item $rr = r$; ($r$ is idempotent)
  \item If $A\subseteq B$, then $a(A)\supseteq a(B)$; ($a$ is antitone)
  \item $aa(A)\supseteq A$; ($aa$ is extensive)
  \item $aaa(A)=a(A)$, and in particular, $aa$ is idempotent.
\end{itemize}

Since the complement operation satisfies $cc=1$, it is extensive and satisfies $ccc=c$ just as $a$ does above. So, the algebraic relations for the radical-annihilator monoid are a generalization of those in the Kuratowski monoid.

We will call the resulting monoid generated by $a$ and $r$ the \textbf{radical-annihilator monoid} of the ring. We can now pursue an investigation of some rings and their monoids that is analogous to the line of questioning for the Kuratowski monoid. We will adapt the terminology from Gardner and Jackson's paper (\cite{GardJack08} Definition 1.2) to apply to radical-annihilator monoids.

\begin{definition}
In a ring $R$ for which the radical-annihilator monoid is defined,
\begin{enumerate}
 \item the \textbf{$k$-number of an ideal} $I$ of $R$ is defined to be the number of distinct sets obtainable from $I$ under the action of elements of the monoid. (In other words, it's the size of the orbit of $I$ under the action of the monoid.)
 \item the \textbf{$k$-number of $R$} is defined to be the maximum $k$-number among ideals of $R$, and $\infty$ if the set of $k$-numbers of ideals is unbounded.
 \item The \textbf{$K$-number of $R$} is defined to be the cardinality of its radical-annihilator monoid.
\end{enumerate}
\end{definition}

Clearly the $K$-number of $R$ is no less than the $k$-number of $R$, which is in turn no less than the $k$-numbers of each ideal of $R$.

\begin{example}\label{ex:1} Let $F$ be a field and let $R=F[x]/(x^2)$. Then the radical-annihilator monoid for $R$ satisfies the relations $a^2=1$, $r^2=r$, and $rararar=rar$, $rara=rar=arar$. 

The ring has only three ideals, and it is easy to verify that the trivial ideals have $k$-numbers of $3$, and the unique nonzero ideal has a $k$-number of $1$. The $K$-number for $R$ is $7$. The diagram for the monoid is a collapse of (i) in Figure \ref{fig:LOCDUAL} found in Section \ref{sec:DUAL}, where the link between $rar$ and $arar$ is collapsed.

\end{example}

\begin{example}\label{ex:2} Let $F$ be a field and let $R=F[x]/(x^3)$. Then the radical-annihilator monoid for $R$ satisfies the relations $a^2=1$, $r^2=r$, $rararar=rar$, $rara=rar$. 

The ring has four linearly ordered ideals. The $k$-numbers of the trivial ideals are $4$, and the $k$-numbers of the nontrivial ideals are $2$. The $K$-number of $R$ is $8$. The diagram for the monoid is precisely (i) in Figure \ref{fig:LOCDUAL} found in Section \ref{sec:DUAL}.

\end{example}

\begin{example}\label{ex:3} (the Clark example). Let $D$ be a discrete valuation ring and $Q$ be its field of fractions. Let $V$ be the $D$ module $Q/D$, and form the trivial extension $R=D\times V$. In more detail, we mean $(r,d)+(r',d'):=(r+r',d+d')$ and $(r,d)(r',d'):=(rr',rd'+dr')$. It is explained in (\cite{NYQFR} Example 6.6 pg 133) that the (infinitely many) ideals of $R$ are linearly ordered and that $R$ is a dual ring as defined in Section \ref{sec:DUAL}. 

There are two prime ideals: $P_1=\{0\}\times V$, and $P_2=M\times V$, where $M$ is the maximal ideal of $D$. By considering several representative types of ideals, we can show that $rar(I)=P_1$ for any ideal $I$ of $R$. Since $a(P_1)=P_1=r(P_1)$, $rar=arar=rara$, so this monoid is like the one in Example \ref{ex:1}. Let $I$ be an ideal strictly between $P_1$ and $P_2$. Then the elements of $\{1,a,r,ar,ra\}$ produce distinct results from $I$, and this is maximal, so the $k$-number of $R$ is $5$. Any ideal between zero and $P_1$ distinguishes $ra$ from $ara$ and $ara$ from $rar$, so the $K$-number for $R$ must be $7$. The radical-annihilator monoid of this ring is again (i) in Figure \ref{fig:LOCDUAL}.

\end{example}

In the context of the Kuratowski closure-complement problem, it is somewhat unexpected that there is a nontrivial relation between the two generators that limits the size of the monoid. Considering that the radical-annihilator relations are algebraically not so far from the closure-complement operations, it is interesting to ask if a radical-annihilator monoid must be finite or not.

\section{The monoid of a finite product of rings}\label{sec:PRODUCTS}

There is a very strong connection between the radical-annihilator monoids of rings and finite products of rings. This is all possible because of the following easily verified facts. Given and element $w$ of $M$ in terms of $r$ and $a$, we use the convention that $w_i$ is the corresponding element of $M_i$ written in terms of $r_i$ and $a_i$.

\begin{prop}\label{prop:products} In a ring product $R=\prod_{i=1}^n R_i$:
\begin{enumerate}
\item The ideals of $R$ are precisely those of the form $\prod_{i=1}^n I_i$ with $I_i\lhd R_i$.
\item In the notation of the last item, $a(I)=\prod_{i=1}^n a_i(I_i)$ where the $a_i$ are the annihilator maps in the $R_i$.
\item An ideal $\prod_{i=1}^n I_i$ is prime iff $I_j$ is prime in $R_j$ for some $j$ and $I_k=R_k$ for $k\neq j$. Consequently $r(I)=\prod_{i=1}^n r_i(I_i)$ where $r_i$ denotes the radical map in $R_i$.
\item If $w,v$ are two elements of $M$, then a relation $w=v$ holds iff $w_i=v_i$ for every $i$.
\item If $J=\prod_{i=1}^n J_i$ is another ideal, then $I\subseteq J$ iff $I_i\subseteq J_i$ for all $i$. Consequently, if $w, v$ are two elements of $M$ then $w\leq v$ iff $w_i\leq v_i$ for every $i$.

\end{enumerate}
\end{prop}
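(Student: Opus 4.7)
The plan is to prove the five items in order, with each one either being a standard fact about ring products or following cleanly from earlier items.

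For part (1), I would use the orthogonal central idempotents $e_i=(0,\dots,1,\dots,0)$ with $1$ in position $i$. These satisfy $\sum_i e_i = 1$ and $e_i e_j = 0$ for $i\neq j$, so for any ideal $I$ of $R$, $I = \sum_i Ie_i$, and $Ie_i$ can be identified with an ideal $I_i$ of $R_i$. The reverse inclusion, that any $\prod I_i$ with $I_i\lhd R_i$ is an ideal of $R$, is immediate from componentwise operations.

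Part (2) then follows from a componentwise computation: given $I=\prod I_i$, an element $(x_1,\dots,x_n)$ annihilates $I$ on the left if and only if $x_i y_i = 0$ for all $y_i\in I_i$, i.e.\ $x_i \in a_i(I_i)$. The same argument works on the right, so we obtain $a(I)=\prod a_i(I_i)$ in the sense that all annihilators coincide componentwise; this in particular confirms that if each $R_i$ has coinciding left and right annihilators then so does $R$, keeping us in the setting where $a$ is defined. Part (5) is likewise immediate since inclusions in a product hold componentwise, and the pointwise order on $M$ is defined in terms of inclusions of ideal images.

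The main work is in part (3). First I would show a prime $P=\prod P_i$ must have $P_k=R_k$ for all but one index: since $e_j(1-e_j)=0\in P$ and $P$ is prime, either $e_j\in P$ or $1-e_j\in P$ for each $j$; analyzing which idempotents lie in $P$ and pairing this with part (1) pins down the form. Conversely, if $P_j$ is prime in $R_j$ and the other coordinates are full, then $R/P \cong R_j/P_j$ is prime, so $P$ is prime. Having classified primes, the formula $r(I)=\prod r_i(I_i)$ follows by writing $r(I)$ as the intersection over all primes containing $I=\prod I_i$, noting that such primes are exactly products with a single proper prime coordinate $P_j\supseteq I_j$ (and all other coordinates equal to $R_k$), and observing that intersecting these over $j$ and over $P_j\supseteq I_j$ produces $\prod_i r_i(I_i)$ coordinatewise. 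The slightly delicate case is $I_i=R_i$ for some $i$; the convention $r_i(R_i)=R_i$ makes the formula hold uniformly.

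Finally, part (4) follows from (2), (3), and (1) by induction on the length of the word. Writing any $w\in M$ as a product of $r$'s and $a$'s, an inductive application of the product formulas gives $w(I)=\prod_i w_i(I_i)$ for every ideal $I=\prod I_i$. Thus $w=v$ in $M$ (meaning $w(I)=v(I)$ for every ideal $I$) if and only if $w_i(I_i)=v_i(I_i)$ for every $I_i\lhd R_i$ and every $i$, which is exactly the statement that $w_i=v_i$ in $M_i$ for all $i$. The only real obstacle in the whole proposition is the prime-ideal classification in (3), which requires the idempotent argument; the rest is bookkeeping with componentwise operations.
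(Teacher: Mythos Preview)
Your argument is correct. The paper does not actually prove Proposition~\ref{prop:products}; it simply introduces it as a list of ``easily verified facts'' and moves on. Your write-up supplies the standard verifications the paper suppresses: the orthogonal idempotent decomposition for (1), the componentwise annihilator and inclusion checks for (2) and (5), the idempotent-splitting characterization of primes for (3), and the induction on word length for (4). One small remark on (3): since the paper works with the noncommutative notion of prime (ideals rather than elements), your sentence ``$e_j(1-e_j)=0\in P$ and $P$ is prime'' should be read as $(e_jR)((1-e_j)R)=0\subseteq P$, which is harmless since the $e_j$ are central and generate two-sided ideals; you may want to phrase it that way explicitly to match the paper's definition.
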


Suppose again that $R=\prod_{i=1}^n R_i$ where all radical and annihilator maps are defined, and we call $R$'s monoid $M$ and each $R_i$'s monoid $M_i$. The strong connection just observed suggests that we define a map from $M$ into $\prod_{i=1}^n M_i$ where $r\mapsto (r_1, r_2, \ldots, r_n)$, $a\mapsto(a_1, a_2, \ldots, a_n)$ and $1\mapsto (1_1, 1_2, \ldots, 1_n)$. This map is clearly unique and injective, but not usually surjective. The image of this map is a submonoid of $\prod_{i=1}^n M_i$, and we will denote it by $\bigodot_{i=1}^n M_i$. $\bigodot_{i=1}^n M_i$ is, by definition, the monoid of $\prod_{i=1}^n R_i$.

The list of observations in the proposition above imply that the relations and ordering of $\bigodot M_i$ can be recovered from known relations and orderings of the $M_i$. So, it is safe to study radical-annihilator monoids of ring direct summands of rings of interest. This will be helpful for both dual rings and zero-dimensional rings.

There are projections $\bigodot_{i=1}^n M_i\to M_j$ for each $j$. Considering this and the fact that $\bigodot_{i=1}^n M_i\subseteq \prod_{i=1}^n M_i$ , the following corollaries should be evident.

\begin{cor}
Denote the radical-annihilator monoid of $R_i$ by $M_i$. Then  $|\bigodot_{i=1}^n M_i|$ is bounded above by $\prod_{i=1}^n |M_i|$ and from below by $\max_{i=1}^n{|M_i|}$
\end{cor}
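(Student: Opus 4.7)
The plan is to read off both bounds directly from the two structural facts highlighted just before the corollary: namely the containment $\bigodot_{i=1}^{n} M_i \subseteq \prod_{i=1}^{n} M_i$ and the existence of the projection maps $\pi_j\colon \bigodot_{i=1}^{n} M_i \to M_j$ for each $j$.

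For the upper bound, I would simply invoke the containment. Since $\bigodot_{i=1}^{n} M_i$ was defined as the image in $\prod_{i=1}^{n} M_i$ of the injective map $M \hookrightarrow \prod_{i=1}^{n} M_i$ sending $r \mapsto (r_1,\dots,r_n)$ and $a \mapsto (a_1,\dots,a_n)$, it is a subset (in fact a submonoid) of the full direct product. Hence its cardinality is at most $|\prod_{i=1}^{n} M_i| = \prod_{i=1}^{n} |M_i|$.

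For the lower bound, the key observation is that each coordinate projection $\pi_j\colon \bigodot_{i=1}^{n} M_i \to M_j$ is \emph{surjective}. Indeed, the image of $\pi_j$ is a submonoid of $M_j$, and it contains $\pi_j(r) = r_j$ and $\pi_j(a) = a_j$; since $r_j$ and $a_j$ generate $M_j$ by definition, $\pi_j$ must be onto. Consequently $|\bigodot_{i=1}^{n} M_i| \geq |M_j|$ for every $j$, and taking the maximum over $j$ gives the claimed lower bound.

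There is really no obstacle here; the work was done in Proposition~\ref{prop:products} and the paragraph defining $\bigodot_{i=1}^{n} M_i$. The only thing worth flagging is the small check that the projections are surjective onto the generators $r_j, a_j$, which is immediate from how the injection was specified. Everything else is a one-line set-theoretic count.
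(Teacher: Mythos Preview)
Your proposal is correct and follows exactly the approach the paper intends: the paper states the corollary as ``evident'' from the containment $\bigodot_{i=1}^n M_i \subseteq \prod_{i=1}^n M_i$ and the existence of the projections $\bigodot_{i=1}^n M_i \to M_j$, and you have spelled out precisely those two observations. Your brief check that the projections are surjective (because their images contain the generators $r_j, a_j$) is the only detail the paper leaves implicit, and you have handled it correctly.
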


\begin{cor}\label{cor:samemonoid}If a finite collection of rings all share the same radical-annihilator monoid $M$, then the monoid of their product is $M$ as well.\end{cor}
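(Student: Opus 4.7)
The plan is to use Proposition \ref{prop:products} parts (4) and (5) directly: both equality and the partial order in $\bigodot_{i=1}^n M_i$ are determined componentwise, so if every component is a copy of the same ordered monoid $M$, then the componentwise relations collapse to the relations of $M$ itself.

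More precisely, I would first fix isomorphisms $\varphi_i : M \to M_i$ of ordered monoids sending the generators $r, a$ of $M$ to the generators $r_i, a_i$ of $M_i$. Consider the free monoid $F$ on two symbols $\{r,a\}$ together with its two natural evaluation maps $\pi : F \to M$ and $\Pi : F \to \bigodot_{i=1}^n M_i$ (the latter sending $r \mapsto (r_1,\dots,r_n)$, $a \mapsto (a_1,\dots,a_n)$ and extending multiplicatively; this is well-defined by the construction of $\bigodot_{i=1}^n M_i$ in the paragraph just before the corollary). I would then show that $\pi$ and $\Pi$ have the same kernel: given words $w, v \in F$, Proposition \ref{prop:products}(4) gives
\[
\Pi(w) = \Pi(v) \iff w_i = v_i \text{ in } M_i \text{ for every } i,
\]
and since each $M_i \cong M$ via $\varphi_i$, this is equivalent to $w = v$ in $M$, i.e.\ $\pi(w) = \pi(v)$. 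Hence the induced map $\bar\pi : M \to \bigodot_{i=1}^n M_i$, $\pi(w) \mapsto \Pi(w)$, is a well-defined monoid isomorphism.

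Finally, I would verify that $\bar\pi$ preserves the order. By Proposition \ref{prop:products}(5), $\Pi(w) \leq \Pi(v)$ iff $w_i \leq v_i$ in each $M_i$, and again the isomorphisms $\varphi_i$ translate this to $w \leq v$ in $M$. Therefore $\bar\pi$ is an isomorphism of ordered monoids, proving that the radical-annihilator monoid of $\prod_{i=1}^n R_i$ is $M$.

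The argument is almost entirely bookkeeping; the only potentially subtle point is making sure the isomorphism one uses on each factor is the one that identifies $r, a$ with $r_i, a_i$ respectively (so that the componentwise images of a word $w$ really correspond to the same element of $M$ in each slot). Once the generators are lined up this way, parts (4) and (5) of Proposition \ref{prop:products} do all of the work with no further computation.
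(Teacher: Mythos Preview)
Your proof is correct and follows essentially the same route the paper intends: the paper does not give an explicit argument for this corollary, merely noting it ``should be evident'' from the projections $\bigodot_{i=1}^n M_i \to M_j$ and the inclusion $\bigodot_{i=1}^n M_i \subseteq \prod_{i=1}^n M_i$, which is precisely the componentwise reasoning via Proposition~\ref{prop:products}(4)--(5) that you have spelled out. Your care about aligning the generators $r,a$ with $r_i,a_i$ is appropriate---in this paper ``same radical-annihilator monoid'' is always meant in the presentation sense (same relations among the distinguished generators), so this alignment is implicit throughout.
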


The requirement that the collection of rings should be finite is indispensable: a consequence of the upcoming Lemma \ref{lem:STRONGREG} is that an infinite product of fields (which is a non-Noetherian strongly regular ring) must have monoid $\{1,a,a^2\}$ even though each of the factors of the product has monoid $\{1, a\}$.

The structures and orderings of monoid products in this article were generated with the assistance of Python software written by the author. The code is available on GitHub (\cite{pomonoidpy}).

\section{Radical-annihilator monoids of some classes of rings}\label{sec:TYPES}

\subsection{Dual rings}\label{sec:DUAL}
By virtue of their definition, commutative dual rings have radical-annihilator monoids governed by relations similar to those in the Kuratowski monoid. If $a_r$ and $a_\ell$ denote the right annihilator and left annihilator maps respectively in a noncommutative ring $R$, then $R$ is said to be \textbf{right dual} if $a_r(a_\ell(T))=T$ for every right ideal $T$, and \textbf{left dual} if $a_\ell(a_r(L))=L$ for every left ideal $L$. A \textbf{dual ring} is just a left and right dual ring, and clearly this is the same thing as $a^2=1$ when $R$ is commutative.

We could even consider noncommutative rings which are ``dual on ideals'' in the sense that $aa(I)=I$ for every two-sided ideal $I$. Requiring such rings to be semiprime in order to get an unambiguous annihilator map is an option, but at the very least it implies a small monoid: just $\{1,a\}$. It will be seen in Corollary \ref{cor:SPDUAL} that a semiprime dual ring is already semisimple (in the sense of rings characterized by the Artin-Wedderburn theorem), so there is little more to say in that case. For this reason, the sequel continues with commutative dual rings.

Naturally, the radical-annihilator monoid of a commutative dual ring is a quotient of the full Kuratowski monoid: since $a^2=1$ for commutative dual rings, the map determined by $c\mapsto a$ and $k\mapsto r$ and the first isomorphism theorem for monoids suffice to see this. It follows that its diagram must be a contraction of the diagram in Figure \ref{fig:KURA} satisfying $rar=rararar$. It turns out that the slightly stronger relation $rar=rarara$ holds in every commutative dual ring. After a journey past a theorem and several lemmas, we will establish this relation.

\begin{thm}\label{thm:RAR}
For any ring $R$ where the radical-annihilator monoid is defined, the relation $rar=rara$ holds in the monoid of $R$ iff $ar(\{0\})\subseteq r(\{0\})$. In this case, $rar$ is the constant map with value $r(\{0\})$.
\end{thm}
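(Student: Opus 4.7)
My plan is to handle both directions by focusing on the behavior of $rar$ at the zero ideal and exploiting two facts already established in the excerpt: $N := r(\{0\})$ sits inside every prime ideal of $R$ (hence inside $r(I)$ for every ideal $I$), and any ideal $J \subseteq N$ satisfies $r(J) = N$. The central claim I will prove in the backward direction is that $rar$ is the constant map $I \mapsto N$; once that is known, $rar = rara$ drops out immediately by writing $rara(I) = rar(a(I)) = N = rar(I)$.

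For the forward direction, the strategy is to evaluate both sides of $rar = rara$ at the single ideal $\{0\}$. Using the conventions $a(\{0\}) = R$, $r(R) = R$, and $a(R) = \{0\}$ (valid because $R$ has identity), routine computation gives $rara(\{0\}) = N$, while $rar(\{0\}) = r(a(N))$. The assumed identity therefore forces $r(a(N)) = N$, and extensivity of $r$ then yields
\[
 a(N) \subseteq r(a(N)) = N,
\]
which is exactly $ar(\{0\}) \subseteq r(\{0\})$.

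For the backward direction, assume $a(N) \subseteq N$ and let $I$ be any ideal. Since $N$ is the intersection of all prime ideals of $R$, we have $N \subseteq r(I)$. Antitonicity of $a$ combined with the hypothesis gives
\[
 a(r(I)) \subseteq a(N) \subseteq N,
\]
so $a(r(I))$ is an ideal contained in $N$. Applying the earlier-noted fact that any such ideal has radical $N$, I conclude $rar(I) = r(a(r(I))) = N$. This proves $rar$ is the constant map with value $N$, and the identity $rar = rara$ follows from the remark in my first paragraph.

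The main obstacle is really just the bookkeeping in the forward direction: one must not be tripped up by the $r(R) = R$ convention, and one must remember to compute $a(R) = \{0\}$ correctly. Once past that, the proof is essentially a two-line application of the two key structural facts about $N$, and the rest of the statement (that $rar$ is constant with value $N$) is a bonus that falls out of the backward argument.
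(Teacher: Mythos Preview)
Your proof is correct and essentially matches the paper's argument: the backward direction is identical (use $N\subseteq r(I)$, antitonicity of $a$, and the fact that ideals inside $N$ have radical $N$ to conclude $rar$ is constantly $N$), and the forward direction differs only in that you evaluate the identity $rar=rara$ at $\{0\}$ while the paper evaluates at $R$ --- since $a$ swaps these two ideals, the computations are the same up to reordering.
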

\begin{proof}
Suppose first that $ar(\{0\})\subseteq r(\{0\})$. For any ideal $I$, $r(I)\supseteq r(\{0\})$ by order-preserving property of $r$. Then using the order-reversing property of $a$ and the hypothesis, $ar(I)\subseteq ar(\{0\})\subseteq r(\{0\})$. Recalling that $r(J)=r(\{0\})$ for any ideal $J$ contained in $r(\{0\})$, we can conclude $rar(I)=r(\{0\})$. In particular, this is true with $I$ replaced by $a(I)$, so both $rar$ and $rara$ are the constant map with value $r(\{0\})$. 

On the other hand, if $rar=rara$, we have $r(\{0\})=ra(R)=rar(R)=rara(R)=rar(\{0\})\supseteq ar(\{0\})$.
\end{proof}

\begin{lem}\label{lem:DUALINTERSECTION}In a commutative dual ring, $a(\cap I_i)=\sum a(I_i)$ for any indexed set of ideals $I_i$.\end{lem}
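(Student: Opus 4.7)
The plan is to reduce the claimed identity to the dual identity $a\bigl(\sum I_i\bigr)=\bigcap a(I_i)$, which holds in any commutative ring without any dual hypothesis, and then exploit the involutive property $aa=1$ that is the defining feature of a commutative dual ring.

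First I would verify the auxiliary identity $a\bigl(\sum_i I_i\bigr)=\bigcap_i a(I_i)$ in an arbitrary commutative ring. This is a direct unpacking of the definition of the annihilator: $x\in a(\sum I_i)$ means $x\cdot\sum I_i=\{0\}$, which holds iff $xI_i=\{0\}$ for every $i$, i.e.\ iff $x\in\bigcap a(I_i)$. No assumption on the ring is needed for this half.

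Next, to obtain the desired identity $a\bigl(\bigcap I_i\bigr)=\sum a(I_i)$, I would set $J_i:=a(I_i)$. Because $R$ is a commutative dual ring we have $a(J_i)=a(a(I_i))=I_i$, so $\bigcap I_i=\bigcap a(J_i)$. Applying the auxiliary identity above with the $J_i$ in the role of the $I_i$ gives $\bigcap a(J_i)=a\bigl(\sum J_i\bigr)$, hence $\bigcap I_i=a\bigl(\sum J_i\bigr)$. Applying $a$ to both sides and invoking $aa=1$ once more yields
\[
a\Bigl(\bigcap_i I_i\Bigr)=a\Bigl(a\Bigl(\sum_i J_i\Bigr)\Bigr)=\sum_i J_i=\sum_i a(I_i),
\]
which is the required formula.

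There is essentially no obstacle: the content of the lemma lies entirely in using the involution $aa=1$ to transport the always-available identity about $a$ of a sum over to an identity about $a$ of an intersection. The only thing worth double-checking is that neither identity requires the index set to be finite — the $a(\sum)=\bigcap a$ computation works verbatim for arbitrary index sets since $\sum I_i$ is defined as finite sums of elements of the $I_i$, so the proof goes through for any indexed family as stated.
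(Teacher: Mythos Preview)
Your proof is correct and follows essentially the same approach as the paper: both arguments hinge on relating $a\bigl(\sum a(I_i)\bigr)$ to $\bigcap I_i$ via the involution $aa=1$, then applying $a$ once more. Your version is slightly cleaner in that you first isolate the general identity $a(\sum J_i)=\bigcap a(J_i)$ and then transport it through the involution, whereas the paper proves the two inclusions separately; but the underlying computation is the same.
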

\begin{proof}
By the order-reversing property of $a$, we have $a(\cap I_i)\supseteq a(I_j)$ for every $j$ in the index set, and it follows that $a(\cap I_i)\supseteq \sum a(I_i)$. 

If $x\in a(\sum a(I_i))$, then in particular $x\in aa(I_j)=I_j$ for all $j$. This shows $a(\sum a(I_i))\subseteq \cap I_i$. Applying $a$ to both sides, $a(\cap I_i)\subseteq aa(\sum a(I_i))\subseteq \sum a(I_i)$.

\end{proof}

\begin{lem}\label{lem:NILPRIMEa}For a commutative ring with minimal prime $P$, $a(P)$ is a nil ideal iff $a(P)\subseteq P$.\end{lem}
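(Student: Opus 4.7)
The plan is to prove both directions with a short element chase, using the standard commutative algebra fact that the nilradical $r(\{0\})$ equals the intersection of all prime ideals of $R$.

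For the direction ``$a(P) \subseteq P \Rightarrow a(P)$ is nil,'' I would take an arbitrary $x \in a(P)$ and show $x$ lies in every prime ideal. Fix a prime $Q$ of $R$. Since $xP = \{0\} \subseteq Q$ and $Q$ is prime, either $x \in Q$ or $P \subseteq Q$. In the latter case the hypothesis $a(P) \subseteq P$ gives $x \in P \subseteq Q$. So in either case $x \in Q$, meaning $x \in r(\{0\})$, i.e. $x$ is nilpotent. Hence every element of $a(P)$ is nilpotent.

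The converse ``$a(P)$ nil $\Rightarrow a(P) \subseteq P$'' is even more immediate: if $a(P)$ is nil then $a(P) \subseteq r(\{0\}) = \bigcap_{Q\text{ prime}} Q$, and $P$ is one of those primes, so $a(P) \subseteq P$.

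There is no real obstacle here; the only ``hard'' ingredient is the well-known characterization of the nilradical as the intersection of all primes, which is assumed as background. I would note, however, that minimality of $P$ is not actually used in either implication — only that $P$ is prime — so the hypothesis is slightly stronger than required, presumably because the lemma will be applied in the minimal-prime setting in the sequel.
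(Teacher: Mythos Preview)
Your proof is correct and rests on the same core observation as the paper: from $P\cdot a(P)=\{0\}\subseteq Q$ and primeness of $Q$ one gets a dichotomy, and the hypothesis $a(P)\subseteq P$ disposes of the branch where $P\subseteq Q$. The paper's version differs only cosmetically: it restricts attention to \emph{minimal} primes $Q$, noting that for $Q$ minimal and $Q\neq P$ the containment $P\subseteq Q$ is outright impossible (since $P$ is prime), so the hypothesis is invoked only at $Q=P$. Your version instead ranges over all primes and uses the hypothesis uniformly in the $P\subseteq Q$ branch. Both arguments are equally short; yours has the mild advantage of making explicit, as you observe, that minimality of $P$ plays no role---indeed neither argument uses it.
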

\begin{proof} First, if $a(P)$ is a nil ideal, then it is contained in the nilradical, and hence in all prime ideals, $P$ included.

Now suppose $a(P)\subseteq P$. If $Q$ is a minimal prime different from $P$, then we have immediately that $a(P)\subseteq Q$, for $Pa(P)\subseteq Q$, but $P\subseteq Q$ is impossible due to minimality of $Q$ and distinctness of $Q$ from $P$. At this point it has been shown $a(P)$ is contained in \emph{every} minimal prime ideal, and so is a nil ideal contained in the nilradical $N$.\end{proof}

The author is indebted to Keith A. Kearnes for pointing out the following lemma, which is the key to show that $a(N)\subseteq N$ in local dual rings which are not fields.

\begin{lem}\label{lem:KEARNES}In a commutative local dual ring which is not a field, $a(P)\subseteq P$ for every prime ideal $P$.\end{lem}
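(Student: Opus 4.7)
The plan is to argue by contradiction. Assume some prime $P$ satisfies $a(P)\not\subseteq P$, and pick $x\in a(P)\setminus P$. I will show this forces a contradiction in three steps.

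First, I would establish that $x$ is a non-unit living in $M$. A commutative local dual ring which is a domain must be a field: in a domain, $a(I)=0$ for every nonzero $I$, whence $aa(I)=R$, and combined with $aa=1$ this leaves only the ideals $0$ and $R$. Since $R$ is not a field, $R$ is not a domain, so $P\neq 0$. Then the dual bijection sends $P$ to a proper ideal, and by locality $a(P)\subseteq M$. In particular $x\in M$.

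Second, I would use primality of $P$ to pin down $a(x)$, then apply the duality to learn that $a(P)$ is principal. From $xP=0$ we have $P\subseteq a(x)$; conversely, any $y\in a(x)\setminus P$ would give $xy=0\in P$ with $x,y\notin P$, violating primality of $P$. Hence $a(x)=P$, and by the dual property $(x)=aa((x))=a(P)$. Running the same argument with $x^2$ in place of $x$ (note $x^2\notin P$ since $P$ is prime, and if $yx^2=0$ then $yx\in a(x)=P$ forces $y\in P$) gives $a(x^2)=P$, so $(x^2)=a(P)=(x)$.

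Finally, the equality $(x)=(x^2)$ lets me write $x=rx^2$ for some $r\in R$, so $x(1-rx)=0$. Since $R$ is local and $rx\in M$, the element $1-rx$ is a unit, forcing $x=0$ — contradicting $x\notin P$.

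The main technical step is the middle one: recognizing that primality, together with $aa=1$, forces $a(P)=(x)$ to be principal. Once that is in hand, the contradiction is a short Nakayama-style cancellation, and it is precisely here that both the local hypothesis and (via $P\neq 0$) the "not a field" hypothesis get used.
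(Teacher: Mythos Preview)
Your argument is correct, and it takes a genuinely different route from the paper's. The paper handles the case where $P$ is nonmaximal by showing that $P=\bigcap_{I\supsetneq P} I$ (via a domain argument in $R/P$), then invokes the earlier lemma $a(\cap I_i)=\sum a(I_i)$ to write $a(P)=\sum_{I\supsetneq P} a(I)$, and observes each summand lies in $P$ because $Ia(I)=0\subseteq P$ with $I\nsubseteq P$. Your approach is entirely element-level: from $x\in a(P)\setminus P$ you deduce $a((x))=P$ by primality, then the dual identity forces $a(P)=(x)$ to be principal, and repeating with $x^2$ yields $(x)=(x^2)$ and a Nakayama-style contradiction. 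Your proof is more self-contained---it does not rely on the intersection--sum lemma---and extracts the pleasant byproduct that $a(P)$ would have to be principal. The paper's proof, on the other hand, leverages lattice-level structure that it has already established and avoids any element manipulation. Both correctly isolate where ``local'' and ``not a field'' enter.
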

\begin{proof} There are two cases: $P$ is the maximal ideal and $P$ is nonmaximal. If $P$ is the maximal ideal and $R$ is not a field, $a(P)$ is a proper ideal of $R$ and is necessarily contained in $P$, the unique maximal ideal.

Instead if $P$ is not maximal, then we claim that $P=\cap\{I\mid P\subsetneq I\}$. If we label this intersection as $K$, then in the integral domain $R/P$, $K/P$ is either the zero ideal or else it is the minimal ideal of $R/P$. Since a domain with a minimal ideal is a field, $K/P$ being minimal would imply that $R/P$ is a field, but that means $P$ is maximal, and that is contrary to assumption. Hence $K/P$ is the zero ideal, that is, $K=P$.

Finally, notice that for each $I\supsetneq P$, the containment $Ia(I)\subseteq P$ yields that $a(I)\subseteq P$ since it is impossible for $I$ to be contained in $P$. Applying this to $a(P)=\sum_{P\subsetneq I}a(I)$, we see that the right hand side is contained entirely in $P$, and so $a(P)\subseteq P$.

\end{proof}

\begin{thm}\label{thm:dualRAR}Excepting fields, we have that $a(N)\subseteq N$ for any commutative local dual ring with nilradical $N$.\end{thm}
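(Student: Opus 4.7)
The plan is to combine the three preceding lemmas mechanically: express $N$ as an intersection of (minimal) primes so that Lemma \ref{lem:DUALINTERSECTION} converts $a(N)$ into a sum, then feed each summand through Lemmas \ref{lem:KEARNES} and \ref{lem:NILPRIMEa} to put it back inside $N$.

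In detail, first I would write
\[
  N \;=\; \bigcap_{P \in \mathrm{Min}(R)} P,
\]
which is the standard description of the nilradical of a commutative ring as the intersection of its minimal prime ideals. Invoking Lemma \ref{lem:DUALINTERSECTION} in the commutative dual ring $R$ then gives
\[
  a(N) \;=\; \sum_{P \in \mathrm{Min}(R)} a(P).
\]
Because an ideal contains any sum of ideals that it contains termwise, it suffices to show that $a(P) \subseteq N$ for every minimal prime $P$.

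For each such $P$, Lemma \ref{lem:KEARNES} (which is where the hypotheses ``local'' and ``not a field'' are actually used) produces the containment $a(P) \subseteq P$. Feeding this into Lemma \ref{lem:NILPRIMEa} upgrades the conclusion to the statement that $a(P)$ is a nil ideal, and therefore $a(P) \subseteq N$. Summing over the minimal primes yields $a(N) \subseteq N$, completing the argument.

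There is no real obstacle here, since the heavy lifting has already been done in the three preparatory lemmas; the only subtlety is conceptual, namely recognizing that the dual-ring identity $a(\bigcap I_i) = \sum a(I_i)$ is the exact tool needed to promote the pointwise information $a(P) \subseteq P$ on individual minimal primes into a global statement about their intersection $N$. The proof does not need to treat finite versus infinite families of minimal primes separately, because Lemma \ref{lem:DUALINTERSECTION} is stated for arbitrary indexed sets.
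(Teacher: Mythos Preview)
Your proof is correct and follows exactly the same route as the paper: write $N$ as the intersection of the minimal primes, apply Lemma~\ref{lem:DUALINTERSECTION} to turn $a(N)$ into $\sum a(P_i)$, and then combine Lemmas~\ref{lem:KEARNES} and~\ref{lem:NILPRIMEa} to see each summand is nil. The paper's argument is just a terser statement of precisely what you wrote.
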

\begin{proof}In the local case, index the family of minimal prime ideals as $\{P_i\mid i\in I\}$. By Lemma \ref{lem:DUALINTERSECTION}, $a(N)=\sum_{i\in I}a(P_i)$. Combining Lemma \ref{lem:NILPRIMEa} and Lemma \ref{lem:KEARNES}, each $a(P_i)$ is a nil ideal, so their sum is contained in $N$. Thus $a(N)\subseteq N$.
\end{proof}

\begin{cor}
In any non-field commutative local dual ring, the annihilator of a prime ideal is a nil ideal.
\end{cor}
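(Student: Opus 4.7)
The plan is to derive this almost instantly from Theorem \ref{thm:dualRAR} together with the order-reversing property of $a$. Let $R$ be a non-field commutative local dual ring with nilradical $N$, and let $P$ be any prime ideal of $R$. Since $N$ is (by one of its standard characterizations) the intersection of all prime ideals, we have $N \subseteq P$.

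Applying the antitone map $a$ to this containment yields $a(P) \subseteq a(N)$. Theorem \ref{thm:dualRAR} tells us that $a(N) \subseteq N$, so chaining these gives $a(P) \subseteq N$. Any ideal contained in the nilradical consists of nilpotent elements, hence $a(P)$ is a nil ideal, as required.

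The main observation is simply that Theorem \ref{thm:dualRAR} already does all the work at the level of $N$, and monotonicity of $a$ propagates the conclusion to every prime $P \supseteq N$ (that is, to every prime at all). There is no real obstacle; the only thing to notice is that one does \emph{not} need to separately invoke the minimal-prime case (Lemma \ref{lem:NILPRIMEa}) or redo Lemma \ref{lem:KEARNES}, since $a(N) \subseteq N$ is strong enough to cover every prime by one application of antitonicity.
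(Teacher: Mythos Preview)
Your proof is correct and is essentially identical to the paper's own argument: the paper also notes $P\supseteq N$, applies the antitone map $a$ to get $a(P)\subseteq a(N)$, and then invokes Theorem~\ref{thm:dualRAR} to conclude $a(P)\subseteq N$.
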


\begin{proof}If $P$ is any prime ideal, then $P\supseteq N$. Immediately it follows $a(P)\subseteq a(N)\subseteq N$, and so $a(P)$ is also nil.

Of course, the conclusion cannot hold when the local dual ring is a field, since then the only prime ideal $\{0\}$ has annihilator $R$ and is not nil.
\end{proof}

Beginning with the Kuratowski monoid (Figure \ref{fig:KURA}) with $r$'s and $a$'s substituted for $k$'s and $c$'s respectively, the additional relation provided by $rar=rara$ collapses to (i) in Figure \ref{fig:LOCDUAL}, and the dual ring in Exercise \ref{ex:2} substantiates that a ring with that diagram exists. 

\begin{thm}A commutative local dual ring $R$ has a radical-annihilator monoid which is one of the following two types: 

\begin{enumerate}[label=(\alph*)]
\item satisfying the relations $r=a^2=1$; (This is the case when $R$ is a field.)
\item satisfying relations $1=a^2$ and $rar=rara$.
\end{enumerate}

The first type has $K$-number 2, and the second type has $K$-number at most $8$. Furthermore, by computing the $\bigodot$ product of the monoids, the relation $rar=rarara$ holds in the radical-annihilator monoid of a (possibly nonlocal) commutative dual ring. The $K$-number for such a composite monoid is at most $10$.
\end{thm}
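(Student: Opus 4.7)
The plan is to handle the local classification first (splitting on whether $R$ is a field), and then to lift the result to the composite case via Proposition~\ref{prop:products}.

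For a local commutative dual ring that happens to be a field, the only ideals are $\{0\}$ and $R$, so $r=1$ on the nose and $a$ simply swaps them; this gives type (a) and $K$-number~$2$. For the case of a local dual ring $R$ that is not a field, dualness gives $a^{2}=1$, Theorem~\ref{thm:dualRAR} supplies $a(N)\subseteq N$ (equivalently $ar(\{0\})\subseteq r(\{0\})$), and Theorem~\ref{thm:RAR} then gives both the relation $rar=rara$ and the fact that $rar$ is the constant map with value $N$. I will then use $r^{2}=r$ and $a^{2}=1$ to derive two useful consequences: $rar=rarar$ (right-multiply $rar=rara$ by $r$) and $arara=arar$ (left-multiply $rara=rar$ by $a$). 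Since every reduced word in the monoid is alternating in $r$ and $a$, these two reductions collapse each such word into the eight-element list $\{1,r,a,ra,ar,rar,ara,arar\}$, establishing type (b) together with the bound of~$8$.

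For the composite case, I take $R=\prod_{i=1}^{n}R_{i}$ to be a finite product of local commutative dual rings and apply Proposition~\ref{prop:products} to identify the monoid of $R$ with $\bigodot_{i=1}^{n}M_{i}$, in which equality and order are determined coordinatewise. I verify $rar=rarara$ factor by factor: for a field factor $r_{i}=1$ makes both sides equal $a_{i}$, while for a non-field local factor $r_{i}a_{i}r_{i}$ is the constant map with value $N_{i}$, so precomposition with $a_{i}r_{i}a_{i}$ cannot change the output, giving $r_{i}a_{i}r_{i}a_{i}r_{i}a_{i}=r_{i}a_{i}r_{i}$. Proposition~\ref{prop:products}(4) then promotes this to a relation in $\bigodot M_{i}$. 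For the bound of $10$, I will tabulate the distinct image pairs $(w\vert_{\text{field}},\,w\vert_{\text{non-field local}})$ that alternating words $w$ can produce: the seven low-length pairs inherited from single-factor analysis together with the three genuinely new mixed pairs $(1,rar)$, $(1,arar)$, and $(a,arar)$, realized respectively by $rara$, $arar$, and $arara$. Every longer alternating word collapses into one of these ten pairs under the two local reductions applied componentwise.

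The main obstacle is the implicit assumption that the commutative dual ring in question decomposes as a finite product of local ones. Because Example~\ref{ex:3} exhibits a non-Artinian local dual ring, this cannot be reduced to the classical Artin--Wedderburn factorization of quasi-Frobenius rings; the theorem must be read either as relying on a decomposition result proper to commutative dual rings or as a statement about those that happen to so decompose. Once that decomposition is available, the $\bigodot$-product computation and the enumeration above are routine.
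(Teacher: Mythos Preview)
Your proposal is correct and follows essentially the same route as the paper: both split the local case into field versus non-field, invoke Theorem~\ref{thm:dualRAR} to feed Theorem~\ref{thm:RAR}, and then verify $rar=rarara$ coordinatewise via Proposition~\ref{prop:products}. Your enumeration of the eight and ten reduced words is more explicit than the paper's, which simply asserts the counts (the paper computed them with software, see \cite{pomonoidpy}); your argument via the two reduction rules $rar=rarar$ and $arara=arar$ is a clean way to see the bound of~$8$ by hand.

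On the ``obstacle'' you flag: it is not actually a gap. Commutative dual rings are semiperfect---the paper itself invokes this in the proof of Corollary~\ref{cor:SPDUAL}---and a commutative semiperfect ring is a finite product of local rings (since $R/J(R)$ is a finite product of fields and idempotents lift). Each factor inherits the dual property, so the decomposition the paper uses is available without any Artinian hypothesis; Example~\ref{ex:3} is itself already local, so no decomposition is needed there. You may simply cite semiperfectness of dual rings rather than leaving the step conditional.
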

\begin{proof}
If $R$ is a field the analysis for case (a) should be apparent. If $R$ is not a field, then Theorem \ref{thm:dualRAR} says that Theorem \ref{thm:RAR} applies, and so $R$ is of the second type. All that remains is to prove the claims about (possibly nonlocal) commutative dual rings.

Decompose the dual ring into $\prod_{i=1}^n R_i$ where the $R_i$ are local dual rings, each having its respective radical-annihilator monoid $M_i$, so that $R$ has radical-annihilator monoid $\bigodot_{i=1}^n M_i$. By Proposition \ref{prop:products}.4, we may simplify by coalescing monoids of the same type. Therefore it suffices to consider the $\bigodot$ product of the two monoid types above. We'll compute with $a=(a_1, a_2)$ and $r=(r_1, r_2)$ where $a_1,r_1$ are the defining maps from the monoid of the field type, and $a_2,r_2$ are the defining maps from the monoid of the nonfield type.

Using the relations for the field monoid, $r_1a_1r_1a_1r_1a_1=a_1a_1a_1=a_1$ and similarly $r_1a_1r_1=a_1$, showing that this part of the $\bigodot$ product satisfies the identity $rar=rarara$.

Using the relations for the nonfield monoid, $(r_2a_2r_2a_2)r_2a_2=(r_2a_2r_2)r_2a_2=r_2a_2r_2a_2=r_2a_2r_2$, showing that the other part of the $\bigodot$ product also satsifies the identity $rar=rarara$.

Hence, the entire monoid of any commutative dual ring satisfies $rar=rarara$. By taking the direct product of two rings attaining the two monoid types, one easily sees a monoid obtaining the diagram in Figure \ref{fig:LOCDUAL} (ii) with $K$-number $10$.
\end{proof}

The relation $rar=rarara$ can be compared to $kck=kckckck$ in the Kuratowski monoid, and it implies $rar=rararar$, of course.

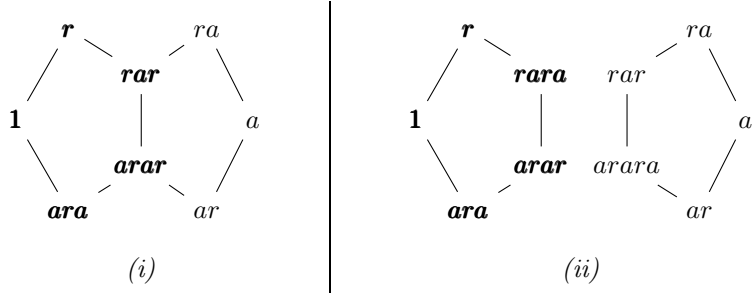
\begin{figure}[ht!]
\begin{center}
\begin{tikzpicture}[description/.style={fill=white,inner sep=2pt}]
\matrix (m) [matrix of math nodes, row sep=0.3em,
column sep=0.3em, text height=1.5ex, text depth=0.25ex, scale=0.33]
{  &\pmb{r}  &    &ra& & \\
   &   &\pmb{rar} &  & & \\
  \pmb{1}&   &    &  &a& \ \ \\
   &   &\pmb{arar}&  & & \\ 
   &\pmb{ara}&    &ar& & \\};
   \path[-] (m-1-2) edge (m-2-3)
            (m-1-2) edge (m-3-1)
            (m-3-1) edge (m-5-2)
			(m-2-3) edge (m-4-3)
            (m-1-4) edge (m-2-3)
            (m-4-3) edge (m-5-2)
            (m-4-3) edge (m-5-4)
            (m-3-5) edge (m-5-4)
			(m-1-4) edge (m-3-5)
;
\node at (-0.2,-2) {\textit{(i)}};
\end{tikzpicture}
\vrule{}
\qquad
\begin{tikzpicture}[description/.style={fill=white,inner sep=2pt}]
\matrix (m) [matrix of math nodes, row sep=0.3em,
column sep=0.3em, text height=1.5ex, text depth=0.25ex, scale=0.33]
{  &\pmb{r}  &    &     &ra& \\
   &   &\pmb{rara}&rar  &  & \\
  \pmb{1}&   &    &     &  &a\\
   &   &\pmb{arar}&arara&  & \\ 
   &\pmb{ara}&    &     &ar& \\};
   \path[-] (m-1-2) edge (m-2-3)
            (m-1-2) edge (m-3-1)
            (m-1-5) edge (m-2-4)
            (m-1-5) edge (m-3-6)
            (m-3-1) edge (m-5-2)
            (m-3-6) edge (m-5-5)
            (m-2-3) edge (m-4-3)
            (m-4-3) edge (m-5-2)
            (m-2-4) edge (m-4-4)
            (m-4-4) edge (m-5-5)
            
;
\node at (0,-2) {\textit{(ii)}};
\end{tikzpicture}
\caption{The largest radical-annihilator monoids for (i) a local dual ring and (ii) an arbitrary dual ring. }
\label{fig:LOCDUAL}
\end{center}
\end{figure}

Let us take stock of how many Kuratowski monoids are appearing as radical-annihilator monoids of commutative dual rings. All of Examples $\ref{ex:1}$, $\ref{ex:2}$, and $\ref{ex:3}$ are dual rings.

\begin{enumerate}[label=(\alph*)]
\item The full Kuratowski monoid is not attainable: all radical-annihilator monoids of commutative dual rings satisfy the relations for the Kuratowski monoid for an open unresolvable space as depicted in \cite{GardJack08} Figure 2.1. Taking the product of the Example \ref{ex:2} with a field produces the Kuratowski monoid of an open unresolvable space.

\item The diagram for the monoid of an open unresolvable and extremally disconnected space is attained by taking the product of ring in Example \ref{ex:1} with a field.

\item A field, of course, yields the same monoid as the Kuratowski monoid of a discrete space.

\item Clearly we cannot produce a radical-annihilator monoid which matches the full Kuratowski monoid of an extremally disconnected space because the monoid must also satisfy the relations for an open unresolvable space.

\item The radical-annihilator monoid cannot match the full Kuratowski monoid of a partition space. The relation in commutative dual rings ($rar=rarara$), together with the relation for partition spaces ($arar=r$) would imply in sequence that $ra=rar$ and $ara=arar=r$, so that it also has the defining relation of a discrete space.

\item Finally, of the two ``almost possible'' monoids mentioned in \cite{GardJack08} Figure 2.2, the commutative dual ring relation ($rar=rarara$) combines with the relation in the larger monoid ($r=rarar$) to generate $r=rara$, and the relations $r=rara=rarar$ produce the smaller monoid. We proceed to show that the smaller monoid is still not attainable. 

If $r(\{0\})=\{0\}$ (i.e. $R$ is semiprime) then $R$ is a finite product of fields and has monoid $\{1,a\}$, so we proceed assuming $r(\{0\})\neq \{0\}$. Then $ra(\{0\})=R$, and in succession $r(\{0\})\neq \{0\}$ implies $ar(\{0\})\neq R$ and $rar(\{0\})\neq R$. So $ra\neq rar$ and this last monoid is also not possible.
\end{enumerate}

\subsection{Semiprime rings}\label{sec:SEMIPRIME}

For this section, we consider semiprime rings which are possibly not commutative. As discussed in the introduction, the annihilator map is unambiguous. Some basic results already appearing in the literature point to a characterization of semiprime rings by their monoid.
  
\begin{lem}(\cite{LAMLMR} Lemma 11.40)
For an ideal $I$ in a semiprime ring $R$, $a(I)=\cap\{P\mid P \text{ prime and } I\not\subseteq P\}$.
\end{lem}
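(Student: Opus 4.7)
The plan is to exploit semiprimeness directly through the characterization $\{0\}=\cap\{P\mid P\text{ prime}\}$, and then prove each inclusion by a short argument with the prime ideal definition, taking care that the product used in ``primality'' is a product of \emph{two-sided} ideals.

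For the inclusion $a(I)\supseteq\cap\{P\mid P\text{ prime},\ I\not\subseteq P\}$, I would take $x$ in the right-hand intersection and aim to show $Ix=0$. By the semiprime characterization, it suffices to check $Ix\subseteq P$ for every prime $P$. There are two cases: if $I\subseteq P$, then $Ix\subseteq I\subseteq P$ because $I$ is a right ideal; if $I\not\subseteq P$, then the hypothesis on $x$ gives $x\in P$, and again $Ix\subseteq P$. Intersecting over all primes yields $Ix=0$, i.e., $x\in a(I)$.

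For the reverse inclusion, take $x\in a(I)$, so $Ix=0$, and fix a prime $P$ with $I\not\subseteq P$. The goal is to conclude $x\in P$. The natural move is to multiply $Ix=0$ on the right by $R$, giving $I(RxR)=(IR)xR=IxR=0\subseteq P$. Now $I$ and $RxR$ are both two-sided ideals, so primality of $P$ forces $I\subseteq P$ or $RxR\subseteq P$; the first is excluded, so $RxR\subseteq P$, and since $1\in R$ we conclude $x\in P$. Thus $x$ lies in every such prime, as desired.

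The only subtle point, and the one I would flag as the main obstacle, is making the primality step honest in the noncommutative setting: the naive product $Ix$ is not the product of two two-sided ideals, so one must replace $x$ by the two-sided ideal it generates (here, conveniently, $RxR$ suffices since $R$ is unital). The fact that left and right annihilators coincide in a semiprime ring, cited earlier in the excerpt, is also used implicitly so that $a(I)$ is a well-defined two-sided ideal and $Ix=0$ unambiguously encodes membership.
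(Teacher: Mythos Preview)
The paper does not supply its own proof of this lemma; it simply cites Lam's \emph{Lectures on Modules and Rings}, Lemma~11.40. So there is no in-paper argument to compare against.

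Your argument is correct and is essentially the standard one. Both inclusions are handled cleanly: the $\supseteq$ direction uses only that $\{0\}$ is the intersection of all primes (semiprimeness) together with the trivial case split on whether $I\subseteq P$, and the $\subseteq$ direction correctly promotes the element $x$ to the two-sided ideal $RxR$ so that the prime-ideal condition can be applied to the product $I\cdot(RxR)$. The verification that $I(RxR)=0$ follows from $IR=I$ and $Ix=0$, exactly as you indicate. Your closing remark about the role of Lemma~11.36 (left and right annihilators of ideals coincide in semiprime rings) is also on point: without it, the symbol $a(I)$ would be ambiguous.

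One very small stylistic point: in the $\supseteq$ direction you write ``$Ix\subseteq I\subseteq P$ because $I$ is a right ideal,'' but the containment $Ix\subseteq I$ uses that $I$ is a \emph{left} ideal (or simply that $I$ is two-sided). This is a slip in the wording, not in the mathematics.
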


\begin{remark}\label{rem:SEMIPRIME}
The condition in the last lemma actually characterizes semiprime rings: $\{0\}=a(R)=\cap\{P\mid P \text{ prime and } R\not\subseteq P\}=N(R)$ in particular, so $R$ is semiprime.
 \end{remark}
 
\begin{thm}
Among rings for which the radical-annihilator monoid is defined, the following conditions are equivalent:
\begin{enumerate}
 \item $R$ is a semiprime ring;
 \item The radical-annihilator monoid is abelian; $(ra=ar)$
 \item Annihilator ideals are semiprime; $(ra=a)$
 \item $ar(I)= a(I)$ for every ideal $I$; $(ar=a)$
 \item $a^2$ dominates $r$ in the monoid order. $(r\leq a^2)$
\end{enumerate} 
 
 Consequently, the monoid of such a ring must be a submonoid of $\{1,a,a^2,r\}$.
 \end{thm}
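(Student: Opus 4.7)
The plan is to establish a web of implications: show that (1) implies each of (2)--(5), and that each of (2)--(5) in turn implies (1) via a one-line evaluation at a trivial ideal. The workhorse for the forward directions is the just-cited Lemma 11.40, which in a semiprime ring expresses $a(I)=\bigcap\{P\mid P\text{ prime},\ I\not\subseteq P\}$; in particular every annihilator is an intersection of primes, hence already a semiprime ideal.

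Given this, (1)$\Rightarrow$(3) is immediate: $a(I)$ semiprime forces $r(a(I))=a(I)$. For (1)$\Rightarrow$(4) the key conceptual observation is that the set of primes containing $I$ coincides with the set of primes containing $r(I)$ (since $r(I)$ is by definition their intersection), so their complements in the set of primes also agree, and Lemma 11.40 then yields $a(I)=a(r(I))$. Combining (3) and (4) gives (2). For (1)$\Rightarrow$(5), observe that $a^2(I)=a(a(I))$ is itself an annihilator and therefore semiprime, and since $I\subseteq a^2(I)$ by extensiveness of $a^2$, minimality of $r(I)$ among semiprime ideals containing $I$ forces $r(I)\subseteq a^2(I)$.

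Each reverse implication collapses to an evaluation at $R$ or $\{0\}$. Writing $N=r(\{0\})$ and using $r(R)=R$, $a(R)=\{0\}$, $a(\{0\})=R$: evaluating (2) or (3) at $I=R$ gives $N=\{0\}$ directly; evaluating (5) at $I=\{0\}$ gives $N\subseteq a(R)=\{0\}$; evaluating (4) at $I=\{0\}$ gives $a(N)=R$, which forces $N=\{0\}$ because the ring has an identity and so the only ideal annihilated by all of $R$ is the zero ideal. In each case $N=\{0\}$, which is precisely semiprimeness.

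For the final structural claim, I would combine the relations just established, $ar=a$ and $ra=a$, with the universal $r^2=r$ and $a^3=a$ (the latter from $aaa=a$). Any word in $\{r,a\}$ then reduces as follows: words in $r$ alone collapse to $1$ or $r$; words in $a$ alone collapse to $a$ or $a^2$; and a word containing both letters must contain an adjacent $ar$ or $ra$ substring, which contracts to $a$, strictly decreasing the number of $r$'s while retaining at least one $a$, so iterating leaves a word in $a$'s only. The step that actually requires insight is the prime-preimage observation driving (1)$\Rightarrow$(4); everything else is bookkeeping.
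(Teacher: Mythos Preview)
Your proof is correct and follows essentially the same route as the paper: Lemma 11.40 powers the forward implications, and evaluation at a trivial ideal collapses each converse. The only noteworthy variations are that you evaluate (2) at $I=R$ (the paper uses $I=\{0\}$) and that your (1)$\Rightarrow$(5) recycles the already-established semiprimeness of annihilator ideals rather than re-running the prime-ideal argument, which is arguably cleaner; neither change is substantive.
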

 \begin{proof}
  First let $I$ be an ideal of a semiprime ring $R$. The preceding lemma shows that each annihilator is a semiprime ideal, so $ra=a$. Also, the prime ideals containing $I$ are exactly those containing $r(I)$, and so prime ideals not containing $I$ are exactly those not containing $r{I}$. Therefore the intersection of the primes not containing $I$ is $a(I)$ and $ar(I)$ at the same time. Thus $ar=a$. The two relations together imply that $ra=ar$.
  
  Conversely, if $ra=a$, then $\{0\}=a(R)=ra(R)=r(\{0\})$ implies $R$ is semiprime. If instead $ar=a$, then $ar(\{0\})=a(\{0\})=R$, but this implies that $r(\{0\})=\{0\}$ since nonzero ideals have proper annihilators in rings with identity. Finally if $ra=ar$, then $R=r(R)=ra(\{0\})=ar(\{0\})$, implying $r(\{0\})=\{0\}$ as in the last case.
  
If $r\leq a^2$, then $r(\{0\})\leq a^2(\{0\})=\{0\}$ shows $r(\{0\})=\{0\}$, so $R$ is semiprime. On the other hand, consider a prime $P$ which does not contain $a(I)$. Then $P$ must contain $I$. The set of primes not containing $a(I)$ is therefore a subset of those containing $I$. Then the intersection of primes containing $I$ is contained in the intersection of primes not containing $a(I)$, but this is saying that $r(I)\subseteq a^2(I)$.
  
  It is easy to compute that $\{1,a,a^2,r\}$ is the largest monoid generated by $r$ and $a$ under those relations, and this is obtained by Example \ref{ex:4}.
 \end{proof}

\begin{figure}[ht!]
\begin{center}
\begin{tikzpicture}[description/.style={fill=white,inner sep=2pt}]
\matrix (m) [matrix of math nodes, row sep=1.5em,
column sep=0.3em, text height=1.5ex, text depth=0.25ex]
{ \pmb{a^2}&\ \ & \\
  \pmb{r}  &\ \ &a\\
  \pmb{1}  &\ \ & \\};

\path[-] (m-1-1) edge (m-2-1)
         (m-2-1) edge (m-3-1)
;
\node at (0,-2) {\textit{(i)}};
\end{tikzpicture}
\vrule\qquad
\begin{tikzpicture}[description/.style={fill=white,inner sep=2pt}]
\matrix (m) [matrix of math nodes, row sep=1.5em,
column sep=0.3em, text height=1.5ex, text depth=0.25ex]
{ \pmb{a^2}&\ \ & &\\
     &\ \ &a& \ \ \\
  \pmb{1}  &\ \ & &\\};
\path[-] (m-1-1) edge (m-3-1)
;
\node at (-0.2,-2) {\textit{(ii)}};
\end{tikzpicture}
\caption{The largest radical annihilator monoids (i) of a semiprime ring having relations $r^2=r$; $a^3=a$; $ra=a=ar$ and (ii) of a fully semiprime ring additionally satisfying $r=1$}
\label{fig:SEMIPRIME}
\end{center}
\end{figure}
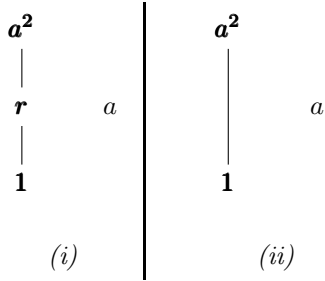

  \begin{example}\label{ex:4}
  Let $D=F[x]$ where $F$ is a field. For $M=(x^2)$, $r(M)=(x)$, $a(M)=\{0\}$, $aa(M)=D$ are all distinct, showing that the $k$- and $K$-numbers for the ring are both $4$.
  \end{example}
  
The last example shows that domains cannot be distinguished from other semiprime rings by their monoid and relations. We are compelled to talk about the elements of the monoid to determine the distinction. Clearly in a domain, $a$ maps all nonzero ideals to the zero ideal, and the zero ideal to the whole ring. Thus the image of $a$ has only two elements, and it is clear that this is the most trivial that $a$ can become. 

The following observation generalizes this to semiprime rings and distinguishes the class of simple rings within the class of prime rings.

\begin{prop}
A ring $R$ is prime iff the annihilator map $a$ is the map sending nonzero ideals to zero and the zero ideal to $R$. If $R$ is a prime ring, then $R$ is simple iff the relation $a^2=r$ holds in its monoid.
\end{prop}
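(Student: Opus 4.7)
The plan is to handle the two biconditionals separately, both by unwinding the definitions of primeness, annihilator, and radical, and by carefully tracking what each operator does at the two distinguished ideals $\{0\}$ and $R$.

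For the first equivalence, I would first observe that $a(\{0\})=R$ always holds in any ring with identity, so only the behavior on nonzero ideals is in question. For the forward direction, suppose $R$ is prime and let $I$ be a nonzero ideal. Since $I\cdot a(I)=\{0\}\subseteq\{0\}$ and $\{0\}$ is a prime ideal with $I\not\subseteq\{0\}$, primeness forces $a(I)=\{0\}$. For the reverse direction, suppose $a$ has the stated form and take ideals $I,J$ with $IJ=\{0\}$; then $J\subseteq a(I)$, so if $I\neq\{0\}$ we get $a(I)=\{0\}$ and hence $J=\{0\}$. Thus $\{0\}$ is prime and $R$ is a prime ring.

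For the second equivalence, I assume throughout that $R$ is prime and use the characterization of $a$ just proved. The forward direction is easy: if $R$ is simple, the only ideals are $\{0\}$ and $R$, and a direct calculation gives $r(\{0\})=\{0\}=a(R)=a^2(\{0\})$ and $r(R)=R=a(\{0\})=a^2(R)$, so $a^2=r$. For the reverse direction, suppose for contradiction that $R$ is prime with $a^2=r$ but not simple, so that some nonzero proper ideal $I$ exists. By the first part, $a(I)=\{0\}$, hence $a^2(I)=a(\{0\})=R$. On the other hand, since $I$ is a proper ideal, $I$ is contained in some maximal ideal (which is prime), so $r(I)$ is an intersection of proper ideals and is itself proper, giving $r(I)\neq R=a^2(I)$, contradicting $a^2=r$.

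The argument is essentially a definition-chase, and I do not expect a serious obstacle. The one point that warrants care is distinguishing the convention $r(R)=R$ from the generic definition of $r$ as an intersection of primes containing the input: the reverse direction of the second equivalence hinges precisely on the fact that for a proper ideal this intersection is nonempty (by existence of maximal ideals) and yields a proper ideal, which is what lets $r(I)\neq R$ while $a^2(I)=R$. Everything else follows from the monoid-level identities already established earlier in the paper.
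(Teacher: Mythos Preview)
Your proof is correct and follows essentially the same line as the paper's. The first biconditional is handled identically; for the second, the paper evaluates $a^2=r$ at a maximal ideal $M$ (using $r(M)=M$) to force $M=\{0\}$, whereas you evaluate at an arbitrary nonzero proper ideal and use properness of $r(I)$ to reach the same contradiction---a cosmetic variation of the same argument.
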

\begin{proof}
For the first claim: let $R$ be prime. Then for every nonzero ideal $I$, $Ia(I)=\{0\}$ implies $a(I)=\{0\}$ by primeness of $\{0\}$. In the other direction, suppose $AB=\{0\}$. If $A$ is nonzero then $a(A)=\{0\}$ by hypothesis. Since $B\subseteq a(A)$, we have shown $B=\{0\}$, and that $R$ is prime.

For the second claim: let $R$ be prime and $M$ be a maximal ideal of $R$ and suppose $r=a^2$. Then $M=r(M)=a(a(M))$. Since $M$ is proper, $a(M)\neq\{0\}$. By primeness, $M=\{0\}$. Thus $R$ is simple. In the other direction, if $R$ is simple it is trivial to check (on the two ideals) that $a^2=r$.
\end{proof}

\begin{cor}
A commutative ring is a domain iff the annihilator map $a$ is the map sending nonzero ideals to zero and the zero ideal to $R$. A commutative domain is a field iff $a^2=r$ in its monoid.
\end{cor}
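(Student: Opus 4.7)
The plan is to derive this corollary as an immediate specialization of the preceding proposition, using only the standard dictionary between noncommutative and commutative terminology. The two key translations are: (a) a commutative ring is prime if and only if it is an integral domain, since both conditions say exactly that the zero ideal is prime, i.e., that $ab=0$ forces $a=0$ or $b=0$; and (b) a commutative ring with identity is simple if and only if it is a field, since in a commutative ring the only way to have only the trivial ideals $\{0\}$ and $R$ is for every nonzero element to be a unit.

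First I would invoke the first clause of the preceding proposition with ``prime ring'' replaced by ``domain,'' which gives the first sentence of the corollary verbatim. Then, assuming $R$ is a commutative domain (and hence a prime ring), I would apply the second clause of the proposition: $R$ is simple iff $a^2=r$ holds in the monoid. Combining this with translation (b) yields the second sentence.

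The main (and only) obstacle is conceptual rather than technical: one must note that the hypothesis ``$R$ is a prime ring'' is genuinely needed for the second equivalence, so the second sentence of the corollary must be phrased for a commutative domain rather than for an arbitrary commutative ring. Once this is observed, nothing further needs to be proved.
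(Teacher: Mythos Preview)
Your proposal is correct and matches the paper's treatment exactly: the paper states the corollary without proof, treating it as the immediate commutative specialization of the preceding proposition via the identifications prime $\leftrightarrow$ domain and simple $\leftrightarrow$ field. Your observation that the hypothesis ``commutative domain'' is genuinely needed in the second sentence is apt and implicit in the paper's phrasing.
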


Here is the result alluded to in the introduction of Section \ref{sec:DUAL}.

\begin{lem}Let $R$ be a directly indecomposable semiprime ring in which no proper ideal is faithful. Then $R$ is a simple ring.\end{lem}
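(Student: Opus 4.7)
The plan is to argue by contradiction: assuming $R$ is not simple, exhibit a nontrivial ring-theoretic decomposition $R \cong a(I) \times aa(I)$ for a suitably chosen proper nonzero ideal $I$, which collides with direct indecomposability.

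First I would pick any proper nonzero ideal $I$ of $R$ (available if $R$ is not simple). The hypothesis ``no proper ideal is faithful'' then forces $a(I) \neq \{0\}$, and since $R$ is semiprime, left and right annihilators coincide so $a$ is unambiguous and the identity $aaa = a$ from the list of properties is available. The central observation will be that the ideal $J := a(I) + aa(I)$ is always faithful in a semiprime ring: indeed, distributing the annihilator over a sum, $a(J) = a(a(I)) \cap a(aa(I)) = aa(I) \cap a(I)$, and this last intersection is zero because $a(I) \cdot aa(I) = 0$ by definition of annihilator, so $\bigl(a(I) \cap aa(I)\bigr)^2 = 0$, which vanishes in a semiprime ring.

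Having shown $J$ is faithful, the hypothesis ``no proper ideal is faithful'' forces $J = R$. Together with $a(I) \cap aa(I) = 0$ and $a(I) \cdot aa(I) = 0$, this exhibits a ring direct sum decomposition $R = a(I) \oplus aa(I)$. Direct indecomposability then forces one summand to vanish. If $a(I) = \{0\}$, then $I$ is a proper faithful ideal, contradicting the hypothesis. If $aa(I) = \{0\}$, then $I \subseteq aa(I) = \{0\}$, contradicting the choice $I \neq \{0\}$. Either way we reach a contradiction, so $R$ has no proper nonzero ideals.

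The main obstacle is the faithfulness step: one has to recognize that in a semiprime ring the complementary pair $(a(I), aa(I))$ always has trivial intersection and zero joint annihilator, so that their sum is forced to equal $R$ by the non-faithfulness hypothesis. Once that is in place, direct indecomposability does the rest almost mechanically. Everything else is routine manipulation using only the properties of $a$ and $r$ already listed in the background section and the semiprime coincidence of one-sided annihilators.
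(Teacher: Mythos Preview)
Your argument is correct and follows essentially the same route as the paper. The paper shows directly that $K = I + a(I)$ is faithful (computing $a(K) = a(I)\cap aa(I)$ and using the same square-zero observation you make), obtains $R = I \oplus a(I)$, and concludes $I\in\{0,R\}$ from indecomposability; your version applies the identical idea to the pair $(a(I), aa(I))$ instead of $(I, a(I))$, which costs you one extra line to trace the contradiction back to $I$ but is otherwise the same proof.
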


\begin{proof}Let $I\lhd R$. By Lemma 11.38 of \cite{LAMLMR} pp 334, the sum $K=I + a(I)$ is a direct sum. This ideal $K$ is faithful: to see this, note that  $a(K)=a(I)\cap aa(I)$, and that the square of this ideal is zero. In a semiprime ring, this implies the ideal is zero.

By hypothesis then, $K$ cannot be proper, so $R=I\oplus a(I)$. Since $R$ is directly indecomposable, either $I=\{0\}$ or $I=R$. This demonstrates $R$ is simple.
\end{proof}

\begin{cor}\label{cor:SPDUAL}If $R$ is a semiprime, dual ring, then $R$ is a semisimple Artinian ring with radical-annihilator monoid $\{1,a\}$.
\end{cor}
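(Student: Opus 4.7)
The plan is to apply the preceding lemma to the directly indecomposable factors of $R$, then upgrade simplicity to Artinianness by invoking dual-ring theory, and finally read off the monoid from the semiprime characterization earlier in this section.

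First, I would verify the hypothesis of the preceding lemma. If $I$ is a proper ideal of $R$ with $a(I)=\{0\}$, duality gives $I=aa(I)=a(\{0\})=R$, contradicting properness; hence no proper ideal of $R$ is faithful. Running the argument from the preceding lemma inside $R$ itself then shows that every two-sided ideal $I$ yields a ring direct sum decomposition $R=I\oplus a(I)$, so the lattice of two-sided ideals is a Boolean algebra of central idempotents. Since $R$ has an identity, any internal ring direct product decomposition is necessarily finite, so $R\cong R_1\times\cdots\times R_n$ with each $R_i$ directly indecomposable. By Proposition \ref{prop:products}, each $R_i$ inherits semiprimeness and the dual-on-ideals condition, and the ``no proper faithful ideal'' check goes through componentwise (apply duality of $R$ to the ideal $I_i\times\prod_{j\ne i}R_j$). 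The preceding lemma then applies to each $R_i$ and yields that each is simple.

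To promote each simple factor to Artinian, I would invoke the classical result (essentially Hajarnavis--Norton, and available in \cite{LAMLMR}) that a dual ring in the full sense---where every one-sided ideal is a one-sided annihilator---has essential left and right socles. Each $R_i$ inherits the full dual property, so it is a simple ring with nonzero socle, hence Artinian by Wedderburn--Artin. Thus $R$ is a finite direct product of simple Artinian rings, i.e., semisimple Artinian. The monoid computation is then immediate: the semiprime characterization above gives $r\le a^2$, and combining this with $a^2=1$ (duality) and the extensivity $1\le r$ forces $r=1$, so the radical-annihilator monoid collapses to $\{1,a\}$.

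The main obstacle is bridging simplicity (which the preceding lemma delivers on indecomposable factors) to Artinianness (which is needed for semisimple Artinian). The preceding lemma alone only yields a finite direct product of simple rings; the additional ingredient is the classical fact that dual rings in Lam's sense are Artinian, without which the structural half of the corollary would not go through.
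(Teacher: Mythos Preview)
Your overall strategy matches the paper's---reduce to indecomposable factors, apply the preceding lemma, then upgrade to Artinian---but there is one genuine gap and one place where the paper is more direct.

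The gap is the passage from ``every two-sided ideal is a ring direct summand'' to ``$R$ is a finite product of directly indecomposable rings.'' The observation that any single direct product decomposition of a ring with identity must be finite is correct, but it does not guarantee that a decomposition into indecomposables \emph{exists}: an infinite Boolean algebra of central idempotents could in principle admit no finest refinement. The paper avoids this by quoting the standard fact that dual rings are semiperfect, which immediately yields the finite decomposition into (local, hence) indecomposable ring factors. You need an ingredient of that strength at this step.

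For the Artinian upgrade the paper argues more directly than your citation of Hajarnavis--Norton: in a simple dual ring, take a maximal left ideal $L$; its right annihilator $a_r(L)$ is a nonzero minimal right ideal because $a_\ell$ and $a_r$ are mutually inverse antitone bijections between one-sided ideal lattices, so the right socle is nonzero and hence (by simplicity) equals $R$. Your route via essential socles is valid but heavier. Incidentally, your own ingredients can be reassembled to bypass the problematic decomposition step entirely: once you know every two-sided ideal is a summand and (from the classical result you cite) that the right socle is essential, the socle---being a two-sided ideal with zero complement---must already equal $R$, so $R$ is semisimple without ever passing to indecomposable factors.
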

\begin{proof}
Since $R$ is dual it is semiperfect, and hence is a finite direct product of rings which are directly indecomposable rings, and each of these is dual and semiprime as well. Without loss of generality then, we additionally suppose $R$ is directly indecomposable as a ring and proceed to show such a ring is a simple Artinian ring.

Because $R$ is right dual, proper ideals have nonzero right annihilators, so the hypotheses of the preceding lemma apply, and $R$ is simple. A maximal left ideal must exist, and its annihilator must be a minimal right ideal, so the right socle of the ring is nonzero and is necessarily the entire ring since $R$ is simple. This concludes the proof that each indecomposable piece of the ring is simple Artinian, and so the original ring is a semisimple ring.
\end{proof}

Rings in which every ideal is a semiprime ideal have been called \textbf{fully semiprime rings}. In the spirit of the present article, we paraphrase this by saying $R$ is fully semiprime if and only if $r=1$. Fully semiprime rings are clearly always semiprime, and Figure \ref{fig:SEMIPRIME} shows that a semiprime ring satisfying $a^2=1$ is fully semiprime. This class of rings includes the important class of von Neumann regular rings. At this point, the reader is cautioned to remember that ``dual'' is strictly stronger than ``$a^2=1$'' for noncommutative rings. There exist simple non-Artinian von Neumann regular rings which satisfy $a^2=1$ trivially and yet they are not dual.

A proposition similar to \ref{cor:SPDUAL} holds for a special class of von Neumann regular rings called \textbf{strongly regular rings}, which are by one definition von Neumann regular rings in which idempotents commute with all elements of the ring. The distinguishing feature here is that von Neumann regular rings do not have to split up into finitely many indecomposable rings.

\begin{lem}\label{lem:STRONGREG}
Let $R$ be a strongly regular ring. Then $a^2=1$ in the monoid of $R$ iff $R$ is right Noetherian. When this is the case, $R$ is actually a finite product of division rings.\end{lem}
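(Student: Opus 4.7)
My plan is a two-direction argument, with the hard direction extracting a direct-sum decomposition $R=I\oplus a(I)$ for every two-sided ideal $I$; all further consequences fall out of that.

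For the easy direction ($\Leftarrow$), I would invoke the standard fact that a Noetherian von Neumann regular ring is semisimple Artinian (finitely generated ideals in a VNR ring are direct summands, so under ACC every ideal is a direct summand). Artin--Wedderburn then gives $R\cong\prod_{i=1}^n M_{k_i}(D_i)$, and strong regularity rules out $k_i\geq 2$ because $M_k(D)$ contains non-central idempotents (e.g.\ $E_{11}$); hence each factor is a division ring $D_i$. Since $a^2=1$ trivially in any division ring (only ideals are $(0)$ and the whole ring, interchanged by $a$), Proposition~\ref{prop:products}(4) lifts the relation to the product.

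For the hard direction ($\Rightarrow$), assume $a^2=1$. The key step is $I\oplus a(I)=R$ for every two-sided ideal $I$. A strongly regular ring is reduced, hence semiprime, so $(I\cap a(I))^2\subseteq I\cdot a(I)=0$ forces $I\cap a(I)=0$, making the sum $I+a(I)$ internal direct. Using $a(I+J)=a(I)\cap a(J)$,
\[
a(I\oplus a(I))=a(I)\cap aa(I)=a(I)\cap I=0,
\]
and applying $a$ under $a^2=1$ yields $I\oplus a(I)=a(0)=R$. Writing $1=e+f$ with $e\in I$ and $f\in a(I)$, orthogonality $ef=fe=0$ (from $I\,a(I)=a(I)\,I=0$) together with $e(e+f)=e$ forces $e^2=e$; strong regularity makes $e$ central, and $I=eR$. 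Hence every ideal is principal with a central idempotent generator.

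Noetherianity is now immediate: for a chain $I_1\subseteq I_2\subseteq\cdots$, applying the decomposition to the union gives $I_\infty=eR$ for some central idempotent $e$, which must lie in some $I_n$, forcing $I_n\supseteq eR=I_\infty$ and stabilizing the chain. In a strongly regular ring $xR=e(x)R=Rx$, so one-sided ideals coincide with two-sided ideals and $R$ is genuinely right (and left) Noetherian. The final reduction to a finite product of division rings repeats the Artin--Wedderburn argument of the easy direction. I expect the main obstacle to be the direct-sum identity $I\oplus a(I)=R$; once that is set up, standard structure theory for von Neumann regular rings handles the remaining steps cleanly.
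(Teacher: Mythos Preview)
Your proof is correct, but both implications take a different path from the paper's.

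For the easy direction, the paper is more elementary: it simply notes that a finitely generated ideal in a von Neumann regular ring has the form $eR$ for an idempotent $e$, computes $a(eR)=(1-e)R$ when $e$ is central, and concludes that $a^2$ is the identity on finitely generated ideals---hence on all ideals once $R$ is Noetherian. Your detour through Artin--Wedderburn is heavier machinery, though it does establish the structural conclusion (finite product of division rings) in the same stroke rather than as an afterthought.

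For the hard direction the contrast is sharper. The paper argues by contrapositive: if $R$ is not right Noetherian it is not semisimple, so it possesses a proper essential (two-sided) ideal $I$; one then checks $a(I)=\{0\}$, since a nontrivial idempotent $e\in a(I)$ would force $I\subseteq aa(I)\subseteq (1-e)R$, contradicting essentiality. Hence $a^2(I)=R\neq I$. You instead argue directly: from $a^2=1$ and semiprimeness you obtain $R=I\oplus a(I)$ for every ideal, extract a central idempotent generator, and deduce that every ideal is principal. The computation $a(I+a(I))=a(I)\cap a^2(I)=\{0\}$ is in fact the same one the paper uses in the lemma preceding Corollary~\ref{cor:SPDUAL}, but you exploit the hypothesis $a^2=1$ to push it to the full decomposition $R=I\oplus a(I)$. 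Your route is arguably cleaner here, avoiding any appeal to essential ideals or to the implication ``non-Noetherian $\Rightarrow$ non-semisimple''; the paper's route, on the other hand, exhibits an explicit witness ideal where $a^2$ fails.
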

\begin{proof}
A finitely generated ideal of a von Neumann regular ring $R$ is of the form $eR$ for an idempotent $e$, and one easily computes that $a(eR)=(1-e)R$ when $e$ is central. Then it
follows that $a^2$ is the identity on finitely generated ideals. If $R$ is right Noetherian, then $aa$ is the identity on all ideals, so that the monoid is $\{1,a\}$.

It is well-known (\cite{LAMFC} p.66 Theorem 4.25) that if $R$ is not right Noetherian, then it is not semisimple. Since it is not semisimple, it has a proper essential right ideal (\cite{StenstromROQ} p.54 Lemma 2.1). Call this essential, proper right ideal $I$. Furthermore, right ideals are two-sided ideals in strongly regular rings (\cite{GDRLVNR} p.26 Theorem 3.2), so $I\lhd R$. We claim that $a(I)=\{0\}$. If it were not so, then there would be a nontrivial idempotent $e$ such that $eR\subseteq a(I)$, and then $(1-e)R \supseteq aa(I)\supseteq I$. But this is a problem since it implies that $(1-e)R$ is an essential ideal, but it cannot be an essential ideal since it intersects $eR$ trivially. Thus $a(I)=\{0\}$, and $aa(I)=R$, so that the monoid is $\{1,a,aa\}$.
\end{proof}

\subsection{Commutative local zero-dimensional rings}
Since a commutative local zero-dimensional ring has only a single prime ideal, it is relatively easy to deduce what the monoid looks like.

\begin{thm}\label{thm:RisRAA}
For a commutative local zero-dimensional ring $R$ with maximal ideal $M$, the following conditions are equivalent:
\begin{enumerate}
\item $a^2(M)=M$;
\item $a^2r=r$;
\item $ra^2=r$.
\end{enumerate}
\end{thm}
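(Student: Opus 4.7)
The plan is to exploit the extreme simplicity of $r$ on a commutative local zero-dimensional ring $R$: the maximal ideal $M$ is the unique prime ideal, so for any proper ideal $I$ we have $r(I)=M$, and $r(R)=R$. This reduces everything to a short case analysis together with one basic identity about $a$.

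First I would record the auxiliary fact that, for any ring with identity, $a(R)=\{0\}$ and $a(\{0\})=R$; in particular $a^2(R)=R$, and for any ideal $I$, the equality $a^2(I)=R$ holds if and only if $a(I)=\{0\}$.

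Next I would handle (1)$\Leftrightarrow$(2). Using the observation that $r$ takes only the values $M$ and $R$, the map $a^2 r$ sends every proper ideal to $a^2(M)$ and sends $R$ to $R$. So the relation $a^2 r = r$ is equivalent to the single equality $a^2(M)=M$, since $a^2(R)=R$ holds automatically.

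Then I would handle (1)$\Leftrightarrow$(3). Since $r$ has image $\{M,R\}$, the relation $ra^2=r$ is equivalent to the statement that $a^2(I)=R$ if and only if $I=R$, for every ideal $I$. By the auxiliary fact, $a^2(I)=R$ iff $a(I)=\{0\}$; thus (3) is equivalent to saying that no proper ideal of $R$ is faithful. Finally I would bridge this back to (1): because $R$ is local, every proper ideal $I$ is contained in $M$, so antitonicity gives $a(M)\subseteq a(I)$. Hence if $a(M)\neq\{0\}$ then $a(I)\neq\{0\}$ for every proper $I$; and conversely if $a(M)=\{0\}$ then the proper ideal $M$ itself is faithful. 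So "no proper ideal is faithful" is equivalent to $a(M)\neq\{0\}$. Finally, since $M\subseteq a^2(M)$ (extensivity of $a^2$) and $M$ is maximal, $a^2(M)$ is either $M$ or $R$; and $a^2(M)=R$ iff $a(M)=\{0\}$ by the auxiliary fact. Thus $a^2(M)=M$ iff $a(M)\neq\{0\}$, completing the loop.

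There is no real obstacle; the main care is in not tripping over the edge case $I=R$ and in correctly invoking the auxiliary identity $a^2(I)=R \Leftrightarrow a(I)=\{0\}$ each time it arises. The whole argument should fit in half a page.
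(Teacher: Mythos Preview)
Your proposal is correct and follows essentially the same strategy as the paper: exploit that $r$ has image $\{M,R\}$ on a local zero-dimensional ring and that, by extensivity of $a^2$ and maximality of $M$, necessarily $a^2(M)\in\{M,R\}$, so everything reduces to whether $a^2(M)=M$ or $a^2(M)=R$. The only cosmetic difference is in your handling of (1)$\Leftrightarrow$(3): you detour through the reformulation ``no proper ideal is faithful'' and the equivalence $a^2(M)=M\Leftrightarrow a(M)\neq\{0\}$, whereas the paper argues more directly---for (1)$\Rightarrow$(3) it uses monotonicity of $a^2$ to get $a^2(I)\subseteq a^2(M)=M$ for proper $I$, and for (3)$\Rightarrow$(1) it simply evaluates both sides at $I=M$ to derive the contradiction $M=r(M)=ra^2(M)=r(R)=R$ if $a^2(M)=R$.
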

\begin{proof}
We show that the first item implies the following two, and then that the following two imply the first item. In several places we use the fact that $r(I)=M$ for every proper ideal $I$.

Suppose $a^2(M)=M$. Then $a^2r(I)=a^2(M)=M=r(I)$ for any proper ideal of $R$. Also, $a^2r(R)=r(R)$ always holds, so $a^2r=r$. Secondly, for any proper ideal $I$, $a^2(I)$ must also be proper since it is contained in $a^2(M)=M$. This is why $ra^2(I)=M=r(I)$ and $ra^2=r$.

We now look at $a^2(M)$ which can only be $M$ or $R$ by extensiveness of $a^2$ and maximality of $M$. If $a^2r=r$, then $R=a^2(M)=a^2r(M)=r(M)=M$ is a contradiction. Similarly, if $ra^2=r$ and $a^2(M)=R$, then $M=r(M)=r(R)=R$ is a contradiction. 
\end{proof}

\begin{thm}\label{ZDIM}
The monoid of a commutative local zero-dimensional ring is a contraction of one of three types of monoids:
\begin{enumerate}[label=(\alph*)]
\item a field having relations $r=a^2=1$;
\item having relations $rar=rara$ and $r=a^2r=ra^2$, with $k$-number at most $5$;
\item having relations $ar=ara$ and $a^2r=ra^2r$, with $k$-number at most $6$.
\end{enumerate}

The first monoid has a $K$ number of $2$, and the last two have $K$-number $9$.
\end{thm}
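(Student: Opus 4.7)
The plan is to split on whether $R$ is a field, and then, for the non-field case, to bifurcate according to the value of $a^2(M) \in \{M,R\}$ (the only possibilities since $a^2$ is extensive and $M$ is maximal). Throughout, I would repeatedly use the crucial fact that the only prime of $R$ is $M$, so $r(I)=M$ for every proper ideal $I$ and $r(R)=R$; hence the image of $r$ is the two-element set $\{M,R\}$.

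For case (a), if $R$ is a field then $M=\{0\}$, so $r$ acts as the identity on the only two ideals $\{0\}$ and $R$, and $a$ is an involution swapping them. This immediately gives the relations and $K$-number $2$.

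For case (b), assume $a^2(M)=M$. Theorem \ref{thm:RisRAA} then yields $a^2r=r=ra^2$. Because $R$ is not a field and $M\neq\{0\}$, the ideal $a(M)$ is proper (the annihilator of a nonzero ideal cannot be $R$), so $a(M)\subseteq M=r(\{0\})$, i.e.\ $ar(\{0\})\subseteq r(\{0\})$; Theorem \ref{thm:RAR} then forces $rar=rara$ and identifies $rar$ with the constant map at $M$. I would use these five relations ($r^2=r$, $a^3=a$, $a^2r=r$, $ra^2=r$, $rar=rara$) to reduce any word in $\{a,r\}^*$: the relations $a^2r=r=ra^2$ forbid $a^2$ from ever appearing adjacent to an $r$, so the only occurrence of $a^2$ is as the whole word; the remaining alternating words $(ar)^na$, $(ra)^nr$, etc.\ all collapse past length $4$ using $rar=rara$. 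This leaves $\{1,a,a^2,r,ar,ra,ara,rar,arar\}$, so the monoid has at most $9$ elements. For the $k$-number, I would compute directly that $rar,rara,arar$ are constant maps (at $M$ or $a(M)$) and that $r(I)=M$, $ar(I)=a(M)$ for proper $I$, so the orbit of any proper nonzero $I$ sits inside $\{I,a(I),a^2(I),M,a(M)\}$, bounding $k$-number by $5$.

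For case (c), $a^2(M)=R$ together with $a^3=a$ forces $a(M)=a^3(M)=a(R)=\{0\}$. From this, I would directly verify by cases on $I$ that $ar$ and $ara$ both coincide with the constant zero map, and that $a^2r$ and $ra^2r$ both coincide with the constant $R$ map (using $r(J)\in\{M,R\}$ and $a(M)=a(R)=\{0\}$). Given the relations $r^2=r$, $a^3=a$, $ar=ara$, $a^2r=ra^2r$, word reduction leaves the list $\{1,a,a^2,r,ra,ar,ra^2,rar,a^2r\}$, again at most $9$ elements. For the orbit bound: since $ar=ara$ is the constant $0$ map, $rar=rara$ is constant $M$, and $a^2r$ is constant $R$, the orbit of any proper nonzero $I$ is contained in $\{I,a(I),a^2(I),0,M,R,ra^2(I)\}$ where $ra^2(I)\in\{M,R\}$, giving at most $6$ values.

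The main obstacle I anticipate is not in any single step but in carrying out the word-reduction bookkeeping cleanly enough that the $9$-element monoid presentations drop out without laboriously enumerating every suffix and prefix; in particular, one must be careful that the nontrivial relations $rar=rara$ in case (b) and $ar=ara$, $a^2r=ra^2r$ in case (c) really do suffice to collapse all words of length $\geq 5$ into the proposed list. The bounds on $k$-numbers follow easily once the constancy of the deeper alternating words is established, and the assertion that the two nontrivial monoids realize $K$-number exactly $9$ can be witnessed by taking examples in which $I, a(I), a^2(I), M, a(M)$ (or the corresponding six values in case (c)) are genuinely distinct.
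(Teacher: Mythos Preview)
Your proposal is correct and follows essentially the same route as the paper. The paper likewise splits off the field case, then bifurcates on whether $a(M)\neq\{0\}$ versus $a(M)=\{0\}$, which is equivalent to your split on $a^2(M)\in\{M,R\}$ (via $a^3=a$ and extensiveness of $a^2$); it then invokes Theorems~\ref{thm:RAR} and~\ref{thm:RisRAA} exactly as you do, and derives $ar=ara$ and $a^2r=ra^2r$ in case~(c) by the same constancy argument you outline. Your treatment is in fact more thorough than the paper's on the word-reduction bookkeeping and on the $k$-number bound in case~(c), which the paper leaves implicit.
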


\begin{proof} Let $M$ be the maximal ideal. If $M=\{0\}$, we are looking at a field and the monoid is clearly $\{1,a\}$ with the relations $r=1$ and $a^2=1$. From now on, we assume $M\neq \{0\}$.

Suppose that $a(M)\neq \{0\}$. Since $M$ is a nil ideal and has a proper annihilator, Theorem \ref{thm:RAR} applies so that $rara=rar$ holds. If $a^2(M)=M$, then Theorem \ref{thm:RisRAA} applies, and this is a ring of type (b). It is easy to discover that the trivial ideals can only produce $R, M, \{0\}$ and $a(M)$. A nontrivial ideal $I$, on the other hand, can only produce $a(I), a^2(I), M, a(M)$ for a total of $5$ distinct sets.

Turning to the case when $a(M)=\{0\}$, we necessarily have $a^2(M)=R$. Since the range of $r$ is $\{R, M\}$, the map $ar$ is uniformly $\{0\}$. In particular, $ar(a(I))=\{0\}$ for all $I$, so $ara$ is also uniformly zero, and equal to $ar$. Now $a^2r$ is uniformly $R$, and nothing changes if $r$ is applied after $a^2r$, so it is equal to $a^2r$ as well.

Note that $ar$ and $ara$ are distinct for a ring of the type (b). For such a ring $ar(R)=\{0\}$, but $ara(R)=a(M)\neq \{0\}$. So, types (b) and (c) are distinct in general.

\end{proof}

Unfortunately, the shape of these monoids does not characterize $0$-dimensional rings. The ring in Example $\ref{ex:3}$ is a two-dimensional ring whose monoid is a contraction of type (b) above. 

Here are diagrams and examples for types (b) and (c).

\begin{figure}[ht!]
\begin{center}
\begin{tikzpicture}[description/.style={fill=white,inner sep=2pt}]
\matrix (m) [matrix of math nodes, row sep=0.5em,
column sep=0.3em, text height=1.5ex, text depth=0.25ex]
{  &   &\pmb{r}  &    &ra& & \\
   &   &   &\pmb{rar} &  & & \\
   &\pmb{a^2}&   &    &  &a& \ \ \\
   &   &   &\pmb{arar}&  & & \\ 
  \pmb{1}&   &\pmb{ara}&    &ar& & \\ };

\path[-] (m-1-3) edge (m-3-2)
		 (m-1-3) edge (m-2-4)
         (m-1-5) edge (m-2-4)
         (m-1-5) edge (m-3-6)
         (m-2-4) edge (m-4-4)
         (m-4-4) edge (m-5-3)
         (m-4-4) edge (m-5-5)
         (m-3-2) edge (m-5-3)
         (m-3-6) edge (m-5-5)
         (m-3-2) edge (m-5-1)  
;
\node at (0,-2.5) {\textit{(i)}};
\end{tikzpicture}
\vrule
\qquad
\begin{tikzpicture}[description/.style={fill=white,inner sep=2pt}]
\matrix (m) [matrix of math nodes, row sep=1.0em,
column sep=0.3em, text height=1.5ex, text depth=0.25ex]
{      &  &\pmb{a^2r}& \ \ &   \\
   ra^2&  &    &  &ra \\
   \pmb{a^2} &\pmb{r} &    &  &a  \\
       &\pmb{1} & rar&  &   \\
       &  &  \pmb{ar}&  &   \\ };

\path[-] (m-1-3) edge (m-2-1)
         (m-2-1) edge (m-3-1)
         (m-3-1) edge (m-4-2)
         (m-4-2) edge (m-5-3)
         (m-2-1) edge (m-3-2)
         (m-3-2) edge (m-4-3)
         (m-4-3) edge (m-5-3)
         (m-2-5) edge (m-3-5)
         (m-1-3) edge (m-2-5)
         (m-2-5) edge (m-4-3)
         (m-3-5) edge (m-5-3)
         (m-3-2) edge (m-4-2)      
;
\node at (0,-2.5) {\textit{(ii)}};

\end{tikzpicture}

\caption{The radical annihilator monoid of a commutative local zero-dimensional ring (i) with relations $r=a^2r=ra^2$ and $rar=rara$ and (ii)  and (ii) with relations $a^2r=ra^2r$ and $ar=ara$.}
\label{fig:ZDRtypes}
\end{center}

\end{figure}
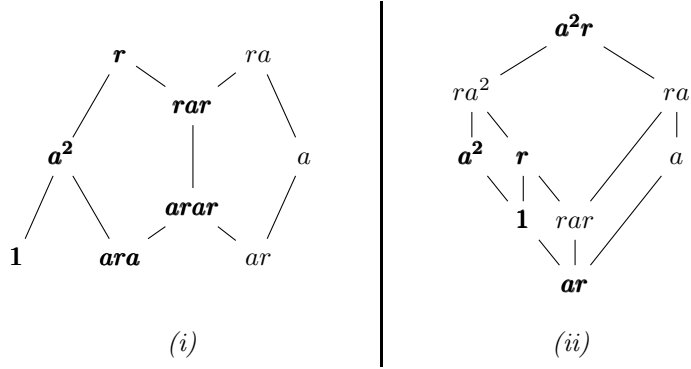

\begin{example} For a field $F$, the ring $F[x,y]/(x,y)^5$ is a ring of type (b) in the last theorem.   If $I=(x^2)$, then $a(I)=(x,y)^3$, $a^2(I)=(x,y)^2$, $ra(I)=(x,y)$, and $ara(I)=(x,y)^4$. This accounts for $5$ sets.
\end{example}
 
\begin{example}\label{ex:ZDIMC}
For an example of the latter type, let $R$ be the quotient of $\mathbb Q[x^{1/2}, x^{1/4}, x^{1/8},\ldots, y^{1/2}, y^{1/4}, y^{1/8}, \ldots]$ by the ideal of elements of the form $x^py^q$ where $p,q$ are dyadic rationals with sum $1$. It is apparent that the ideal $M$ generated by these fractional powers of $x$ and $y$ is nil since its generators are nilpotent. The quotient by this ideal is isomorphic to $\mathbb Q$, so it is the unique maximal ideal. Finally, it's easy to see that for any nonzero element $r$, one can always find an $n$ large enough such that $x^{2^{-n}}r\neq 0$, so $r$ does not annihilate $M$. Let $I=(x^{1/4})$. Then $a(I)=(\{x^{p}y^{q}\mid p+q\geq 3/4\})$ where $p,q$ are dyadic rationals. Now $aa(I)$ contains $(y^{1/4})$ which is not contained in $I$, so $a^2(I)\neq I$. We have $r(I)=ra(I)=ra^2(I)=M$, and the annihilator of any of these is $\{0\}$, whose annihilator is $R$, accounting for six sets.
\end{example}

By taking a local zero-dimensional ring for each of the types (a), (b), and (c) in the classification, we find the largest possible radical-annihilator monoid for a finite product of such rings. In particular, this gives us the largest monoids of commutative Artinian and perfect rings.

\begin{prop}
Suppose $M_1$, $M_2$, $M_3$ are monoids of types (a), (b), (c) respectively. Then
\begin{itemize}
\item $M_1\bigodot M_2$ has $11$ elements, relations $rara=rarar$ and $r=a^2r=ra^2$.
\item $M_1\bigodot M_3$ has $12$ elements, relations $ara=arar$, $a^2r=ra^2r$, and $ar=ara^2$.
\item $M_2\bigodot M_3$ has $13$ elements, relations $rar=rara$, $a^2r=ra^2r$, and $ar=ara^2$.
\item $M_1\bigodot M_2\bigodot M_3$ has $16$ elements, relations $ara^2=ar$, $ra^2r=a^2r$, $rara=rarar$.
\end{itemize}
\end{prop}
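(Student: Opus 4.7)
My plan is to reduce the proposition to Proposition \ref{prop:products}, which provides both tools needed: part (4) reduces checking a relation in $\bigodot_i M_i$ to checking it in each $M_i$, and the observation that $\bigodot_i M_i \hookrightarrow \prod_i M_i$ reduces counting elements to counting distinct projection tuples over all words in $r, a$.

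For the relations, each claimed identity is either a defining relation of some factor or follows from the defining relations together with the always-valid $a^3 = a$ and $r^2 = r$. Take the triple-product relation $ra^2 r = a^2 r$ as an example. In $M_1$ both sides collapse to $1$ via $r = 1$ and $a^2 = 1$. In $M_2$ one applies $ra^2 = r$ to get $ra^2 r = r = a^2 r$. In $M_3$ it is precisely a defining relation. Similarly $ara^2 = ar$ reduces in each factor, using $ra^2 = r$ in $M_2$, $ara = ar$ in $M_3$, and the field relations in $M_1$. The relation $rara = rarar$ follows from $rar = rara$ (in $M_2$), from $rara = rar$ derived via $ara = ar$ (in $M_3$), and trivially in $M_1$. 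The three pairwise products are handled by ignoring the absent factor.

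For the counts, since $\bigodot_i M_i \hookrightarrow \prod_i M_i$, I count distinct projection tuples. Extra elements appear beyond $|M_j|$ precisely when a relation valid in one factor fails in another. For $M_1 \bigodot M_2$: because $M_1$ distinguishes words by the parity of their $a$-count (as $r = 1$ and $a^2 = 1$), a relation in $M_2$ identifying words of different $a$-parity splits an $M_2$-element into two product elements. This happens for exactly two $M_2$-elements, namely $rar$ (via $rar = rarar$) and $arar$ (via $arar = arara$), giving $7 + 2 \cdot 2 = 11$ elements. For $M_1 \bigodot M_3$: the same phenomenon applies to three $M_3$-elements — $ar$ (via $ar = ara$), $rar$ (via $rar = rara$), and $a^2 r$ (via $a^2 r = a^2 ra$, which follows from $ara = ar$) — giving $6 + 3 \cdot 2 = 12$. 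The case $M_2 \bigodot M_3$ requires tracking how word identifications in both factors interact, and gives $13$; and the triple product gives $16$.

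The main obstacle is the bookkeeping, particularly for the triple product where three sets of relations must be reconciled simultaneously. The Python code cited in Section \ref{sec:PRODUCTS} automates the task, but a by-hand enumeration is routine: list all short words in $r, a$, reduce modulo the combined relations of the factors at hand, and tabulate the distinct elements. No deeper input than Proposition \ref{prop:products} is required.
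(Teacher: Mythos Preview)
Your proposal is correct and is essentially the by-hand articulation of what the paper does: the paper gives no written proof of this proposition, relying instead on the Python computation referenced at the end of Section~\ref{sec:PRODUCTS}, and your argument via Proposition~\ref{prop:products} (checking each claimed relation coordinatewise, then counting elements through the embedding $\bigodot_i M_i \hookrightarrow \prod_i M_i$) is exactly the reasoning that computation automates. Your parity-splitting count for $M_1\bigodot M_2$ and $M_1\bigodot M_3$ is accurate, and the remaining two cases are, as you say, routine bookkeeping with no additional ideas required.
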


Figure \ref{fig:ZDR} diagrams the largest of these monoids.

\begin{figure}[ht!]
\begin{center}
\begin{tikzpicture}[description/.style={fill=white,inner sep=2pt}]
\matrix (m) [matrix of math nodes, row sep=0.5em,
column sep=0.3em, text height=1.5ex, text depth=0.25ex]
{  &   &    &      & \ \ &     &    &   \\  
   &   &\pmb{a^2r} &      & \ \ &     &a^2ra&   \\
   &ra^2&    &\pmb{a^2rara}& \ \ &a^2rar&    &ra \\
  \pmb{r}&   &    &      & \ \ &     &    &   \\
   &   &\pmb{rara}&      & \ \ &     &rar &   \\
   &\pmb{a^2} &    &\pmb{arar}  & \ \ &arara&    &a  \\
  \pmb{1}&   &\pmb{ara} &      & \ \ &     &ar  &   \\
   &   &    &      & \ \ &     &    &   \\};

\path[-] (m-2-3) edge (m-3-2)
         (m-2-3) edge (m-3-4)
         (m-3-2) edge (m-4-1)
         (m-3-2) edge (m-6-2)
         (m-4-1) edge (m-7-1)
         (m-4-1) edge (m-5-3)
         (m-6-2) edge (m-7-1)
         (m-6-2) edge (m-7-3)
         (m-3-4) edge (m-5-3)
         (m-5-3) edge (m-6-4)
         (m-6-4) edge (m-7-3)
         (m-2-7) edge (m-3-6)
         (m-2-7) edge (m-3-8)
         (m-3-6) edge (m-5-7)
         (m-3-8) edge (m-5-7)
         (m-3-8) edge (m-6-8)
         (m-5-7) edge (m-6-6)
         (m-6-6) edge (m-7-7)
         (m-6-8) edge (m-7-7)    
;
\end{tikzpicture}
\caption{The largest possible radical-annihilator monoid of a finite product of commutative local zero-dimensional rings. It is no mistake that $ra^2$ is not in bold since it is not idempotent: $(ra^2)^2=a^2r$.}
\label{fig:ZDR}
\end{center}
\end{figure}
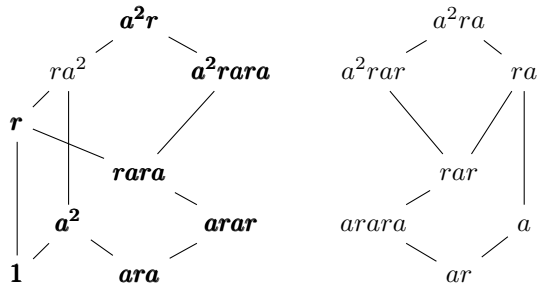

\subsection{Further comments on the results}
The types of maps in the radical-annihilator monoid depart from the types found in the Kuratowski monoid in very visible ways. For one thing, the monoid is not always finite. Furthermore, maps in the radical-annihilator monoid can be constant maps. We will also comment on sometimes surprising patterns of idempotency of the maps.

In \cite{GardJack08}, maps are ``even'' or ``odd'' based on evenness or oddness of the number of $c$'s that appear in the map. We choose not to adopt the same nomenclature for radical-annihilator monoids because the count becomes ill-defined and maps become both even and odd, as in the case of rings where $rara=rar$. 

Still, it is true that maps with an even number of $a$'s are order-preserving, and the ones which can be factorized with an odd number of $a$'s are order-reversing. When a map can be factorized both ways, it has to be both order-preserving and order-reversing.

\begin{prop}If $f$ is an order-preserving and order-reversing map on a poset with a greatest element or a least element, $f$ must be a constant map.\end{prop}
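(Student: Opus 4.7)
The plan is to observe first that the two hypotheses together force $f$ to be constant on any comparable pair, and then use the distinguished element (greatest or least) to bridge arbitrary pairs.

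The key step is this: if $x \le y$ in the poset, then order-preservation gives $f(x) \le f(y)$ while order-reversal gives $f(x) \ge f(y)$, so $f(x) = f(y)$. In other words, $f$ is constant on every chain.

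Next, suppose the poset has a greatest element, call it $\top$. For any two elements $x, y$, we have $x \le \top$ and $y \le \top$, so by the previous step $f(x) = f(\top) = f(y)$. Hence $f$ is the constant map with value $f(\top)$. The argument if $\bot$ is a least element is symmetric: $\bot \le x$ and $\bot \le y$ give $f(x) = f(\bot) = f(y)$.

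There is no real obstacle here; the only thing worth flagging is why the hypothesis of a greatest or least element is actually needed. Without it, ``order-preserving and order-reversing'' merely forces $f$ to be constant on each connected component of the comparability graph, and a disconnected poset (such as an antichain) admits non-constant such maps. The top/bottom element supplies the common upper or lower bound that links every pair of elements into a single chain of equalities.
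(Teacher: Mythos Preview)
Your proof is correct and matches the paper's approach: compare an arbitrary element to the greatest (or least) element and use both hypotheses to force equality. The paper states this in one line, while you add the helpful observation that $f$ is constant on chains and explain why the bounding element is needed, but the core argument is identical.
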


\begin{proof}If $R$ is the greatest element and $I$ is an arbitrary element of the poset, $I\subseteq R$ implies $f(I)=f(R)$ after applying both hypotheses of $f$. The proof in the case the poset has a least element is similar.\end{proof}

This also sheds some light on why the diagrams of some dual rings and some zero-dimensional rings are connected and not split into two disjoint pieces: the order-preserving and order-reversing maps can overlap.

For dual rings, the constant maps are $rar$ and $arar$ appearing in Figure $\ref{fig:LOCDUAL}$ (i), but the same maps are not constant in (ii) of the same figure. For commutative local zero-dimensional rings, the constant maps in Figure \ref{fig:ZDRtypes} (i) are $rar$ and $arar$, since $rar=rara$ and $arar=arara$. In (ii) of the same figure, $a^2r$, $rar$ and $ar$ are constant since $arar=ar$.

As for the idempotency of the operators, recall that the idempotent operators in diagrams have been boldfaced. The Kuratowski monoid is remarkably regular with this respect since all of its even operators are idempotent. The radical-annihilator monoid for a dual ring, which is just a collapse of the Kuratowski monoid, unsurprisingly has the same property. The radical-annihilator monoid for semiprime rings and type (i) local zero-dimensional rings follow suit.

But surprisingly, the local zero-dimensional rings of type (ii), and consequently the full diagram in \ref{fig:ZDR}, break this pattern via the oddball map $ra^2$. In Example \ref{ex:ZDIMC}, $ra^2(I)=M$, but $ra^2ra^2(I)=ra^2(M)=R$.

So far the only class of rings that do not satisfy $rar=rara$ have been simple rings and their finite products. Here is an example of a commutative nonfield ring which does not satisfy this relation. 

\begin{example}Let $F$ be a field and consider the ring $F[x,y]/(x^2,xy)$. The nilradical is the ideal $(x)$, and it is apparent that $rar(x)=ra(x)=r(x,y)=(x,y)$, but $rara(x)=rar(x,y)=ra(x,y)=r(x)=(x)$.

\end{example}

\section{The dualradical map}\label{sec:DUALRAD}

Following \cite{Henriksen1990}, the \textbf{hull} of an ideal $I$ is the set of prime ideals containing $I$ and is denoted by $h(I)$. The complement of this set in the set of all prime ideals of $R$, the set of prime ideals not containing $I$, is called the \textbf{hull complement of $I$} and denoted by $h^c(I)$. 

Taking $r(I)=\cap h(I)$ as inspiration, we create a map on ideals of $R$ called $d$ given by $d(I)=\cap h^c(I)$ and call it the \textbf{dualradical map}. As we saw in Remark \ref{rem:SEMIPRIME}, the relation $d=a$ characterizes semiprime rings. Unlike $a$, there are no ambiguity problems when defining $d$ in any ring.

Considering that the two hulls are related by complementation in $Spec(R)$, it is interesting to ask if $Spec(R)$ is somehow connected to the radical-annihilator monoid. It is well known that $Spec(R)$ is homeomorphic to the set of prime ideals of $Spec(R/r(\{0\}))$ under the hull-kernel topology -- the topology using hulls of ideals as closed sets. Since the monoid of semiprime rings is limited compared to the monoids of arbitrary rings, it is clear that the monoid and its relations do not carry information that is fine-grained enough. If any such characterizations do exist, they necessarily mention specific properties about $r$ and $a$ that go beyond their algebraic relations.

One such result appeared recently: Birkenmeier, Ghirati and Taherifar obtained a characterization of semiprime rings with extremally disconnected spectra. It was given in terms of the radical map's behavior on the lattice of ideals. The condition is that $r(I\cap J)=r(I)+r(J)$ for all pairs of ideals $I,J$ (\cite{Birken_2015} Theorem 4.4). This is doubly of interest since the class of extremally disconnected spaces is one which the Kuratowski monoid can discriminate.

Further introspection into specific properties of $r$ and $a$ will undoubtedly be fruitful; however, we now return to the current program of seeing what the monoid and its relations tell us about the ring.

Earlier in Section \ref{sec:SEMIPRIME} it was shown that a ring is semiprime iff $r\leq a^2$. Using the hull and hull complement notation, we generalize that conclusion and record additional properties of $d$.

\begin{thm}\label{thm:DUALPROPS}
Let $R$ be a ring in which $r$ and $a$ are defined, and let $d$ denote the dualradical map. Then the following is true:
\begin{enumerate}
\item $d$ is an order-reversing map on the ideals of $R$;
\item $ra\leq d$ and $r\leq da$;
\item $r\leq d^2$;
\item $d^3=d$;
\item $rd=d=dr$.
\end{enumerate}

\end{thm}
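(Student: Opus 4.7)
The plan is to tackle the items roughly in the order (1), (5), (3), (2), (4), because (4) depends cleanly on (3) and (5), while (2) requires the most technical work and is best isolated. Item (1) is immediate: $I \subseteq J$ makes $h(J) \subseteq h(I)$ and hence $h^c(I) \subseteq h^c(J)$, so intersecting reverses the inclusion. For (5), I would use two observations: first, a prime contains $I$ iff it contains $r(I)$, so $h^c(I) = h^c(r(I))$ and $dr = d$; second, $d(I)$ is by definition an intersection of primes (with $R$ interpreted as the empty intersection), so it is semiprime and hence fixed by $r$, giving $rd = d$. For (3), if $P$ is a prime with $d(I) \not\subseteq P$, then $P$ cannot belong to $h^c(I)$ (otherwise $d(I) = \cap h^c(I) \subseteq P$), so $P \in h(I)$, i.e. $I \subseteq P$, hence $r(I) \subseteq P$; intersecting over all such $P$ yields $r(I) \subseteq d^2(I)$.

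Item (2) is the technical heart. For $ra \leq d$, I plan to first establish $a(I) \subseteq d(I)$ and then apply $r$, absorbing the extra copy of $r$ via $rd = d$ from (5). Given $P \in h^c(I)$, choose $x \in I \setminus P$; for any $y \in a(I)$ and $t \in R$, the relations $I a(I) = 0 = a(I) I$ (available because left and right annihilators coincide by standing hypothesis) guarantee $x t y \in I \cdot a(I) = 0 \subseteq P$, so $xRy \subseteq P$, and the element-version of primeness forces $y \in P$. Hence $a(I) \subseteq P$ for each such $P$, and $a(I) \subseteq d(I)$. The inequality $r \leq da$ is the symmetric argument: for $P \in h^c(a(I))$ pick $y \in a(I) \setminus P$, observe $x R y \subseteq I \cdot a(I) = 0 \subseteq P$ for every $x \in I$, conclude $x \in P$, and therefore $I \subseteq da(I)$; since $da(I)$ is semiprime, $r(I) \subseteq da(I)$.

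Item (4) then bootstraps on (3) and (5). Since $d$ is order-reversing by (1), applying $d$ to the inequality $r \leq d^2$ of (3) yields $d^3 \leq dr$, and $dr = d$ from (5) gives $d^3 \leq d$. Conversely, substituting $d(I)$ for $I$ in (3) yields $r(d(I)) \leq d^2(d(I)) = d^3(I)$, and $rd = d$ then gives $d \leq d^3$. Combined, $d^3 = d$. The main obstacle I anticipate is keeping the prime-ideal arguments in (2) rigorous in the noncommutative setting: I must use the element-version of primeness ($xRy \subseteq P$ implies $x \in P$ or $y \in P$) rather than a product-of-elements version, and I need both $Ia(I) = 0$ and $a(I)I = 0$ to be available so that the intermediate term $xRy$ can genuinely be trapped inside the zero ideal.
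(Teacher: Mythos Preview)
Your proof is correct and follows essentially the same route as the paper. Two minor differences are worth noting: for $dr=d$ you argue directly that $h^c(I)=h^c(r(I))$ (since a prime contains $I$ iff it contains $r(I)$), whereas the paper deduces $dr=d$ only after proving $d^3=d$; your version is cleaner and avoids the apparent forward reference. Conversely, for item (2) the paper argues at the ideal level ($I\cdot a(I)=\{0\}\subseteq P$ and $I\nsubseteq P$ force $a(I)\subseteq P$ by the definition of a prime ideal), which is slightly simpler than your element-wise $xRy\subseteq P$ argument, though both are valid.
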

\begin{proof}
($d$ is order-reversing) If $I, J$ are ideals of $R$ and $I\subseteq J$, then any prime not containing $I$ cannot contain $J$. Therefore $h^c(I)\subseteq h^c(J)$, and $d(J)\subseteq d(I)$.

($ra\leq d$ and $r\leq da$) If $P$ is prime and $I$ is an ideal and $I\nsubseteq P$, then $a(I)\subseteq P$. This says that $h^c(I)\subseteq h(a(I))$. Taking complements in the set of prime ideals, we also have that $h(I)\supseteq h^c(a(I))$. In the former case, intersecting the two sets yields $d(I)\supseteq ra(I)$, and in the latter case $r(I)\subseteq da(I)$.

($r\leq d^2$) If $I\nsubseteq P$, then $d(I)\subseteq P$. The contrapositive asserts that $h^c(d(I))\subseteq h(I)$. Taking intersections, $d^2(I)\supseteq r(I)$.

($rd=d$) The output of $d$ is always semiprime, and $r$ acts like an identity on semiprime ideals.

($d^3=d$) On one hand, $d^2\geq r\geq 1$, and since $d$ is order-reversing, $d^3\leq d$. On the other hand, $d^2\geq r$ applied to $d(I)$ implies $d^3(I)\geq rd(I)=d(I)$ for all ideals $I$. Thus $d^3\geq d$.

($dr=d$) By order-reversal, $r\geq 1$ implies $dr\leq d$. From $d^2\geq r$, it follows that $d=d^3\leq dr$. 
\end{proof}

Call the monoid generated by $\{r,d\}$ the radical-dualradical monoid.

\begin{cor}
For any ring $R$, the radical-dualradical monoid of $R$ is a collapse of the four element monoid $\{1, r, d, d^2\}$, which is subject to the relations $r^2=r$, $d^3=d$, and $rd=d=dr$. The diagram of the largest such monoid is the same as Figure \ref{fig:SEMIPRIME} (i) with $d$'s substituted for $a$'s.
\end{cor}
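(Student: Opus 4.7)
The plan is to derive this corollary directly from the relations established in Theorem \ref{thm:DUALPROPS}, so the argument is essentially a normal-form computation followed by reading off the Hasse diagram. First, I would show that every word in the alphabet $\{r,d\}$ reduces to an element of $\{1, r, d, d^2\}$. Because $rd = d = dr$, any occurrence of $r$ adjacent to a $d$ is absorbed; iterating this absorption, any word containing at least one $d$ collapses to a pure power $d^n$ with $n\geq 1$, which in turn lies in $\{d, d^2\}$ since $d^{n+2}=d^n$ follows from $d^3=d$. Words with no $d$ are pure powers of $r$, and these reduce to $\{1, r\}$ by $r^2=r$. Hence the radical-dualradical monoid of $R$ is a quotient of the four-element monoid with the stated presentation.

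Next, I would extract the Hasse diagram from the order information in Theorem \ref{thm:DUALPROPS}. Extensiveness of $r$ gives $1 \leq r$, and item (3) of that theorem gives $r \leq d^2$; together these force the chain $1 \leq r \leq d^2$. No algebraic relation among the generators forces $d$ to be comparable with any of $1$, $r$, or $d^2$, so in the largest possible case $d$ sits off to the side as an incomparable element. This matches Figure \ref{fig:SEMIPRIME} (i) with $a$ replaced by $d$.

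Finally, I would verify that this largest form is genuinely attained by exhibiting a ring in which no further collapses occur. The polynomial ring $F[x]$ of Example \ref{ex:4} suffices: it is a domain, hence semiprime, so Remark \ref{rem:SEMIPRIME} gives $d=a$, and the computations already recorded in Example \ref{ex:4} show that $1, r, a, a^2$ are four distinct functions. A small case-check on the ideals $\{0\}$ and $(x^2)$ shows that $d$ is incomparable with each of $1$, $r$, and $d^2$ (for instance $d(\{0\})=R$ precludes $d\leq r$, while $d((x^2))=\{0\}$ precludes $1\leq d$ and $r\leq d$, and $d^2(\{0\})=\{0\}\subsetneq R=d(\{0\})$ precludes $d\leq d^2$).

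There is no real obstacle here: the word reduction is purely formal, and all the substantive analytic content has already been absorbed into Theorem \ref{thm:DUALPROPS}. The only subtlety worth flagging is that one must distinguish \emph{algebraic} collapses forced by the presentation from \emph{order-theoretic} collapses forced by the poset structure of functions, and the example in $F[x]$ is what rules out the latter in the largest case.
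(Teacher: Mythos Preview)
Your proposal is correct and matches the paper's intent: the corollary is stated without proof, as an immediate consequence of Theorem~\ref{thm:DUALPROPS}, and your normal-form reduction together with the order inequalities is exactly the implicit argument. Your explicit verification via $F[x]$ that the four-element monoid with the stated Hasse diagram is actually attained is a detail the paper leaves to the reader (relying on Example~\ref{ex:4} and the coincidence $d=a$ in semiprime rings), so you have supplied slightly more than the paper does rather than less.
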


\begin{cor}
If $d^2=1$ in the radical-dualradical monoid of $R$, then $R$ is fully semiprime, hence $a=d$. The monoid then coincides with the collapse of the radical-annihilator monoid of a semiprime ring which consists of two disconnected points: the identity and $d$ ($=a$).
\end{cor}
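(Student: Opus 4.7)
The plan is to chain together three facts already in hand: the monoid inequality $r\leq d^2$ from Theorem \ref{thm:DUALPROPS}, the extensivity of $r$, and the equality $d=a$ that characterizes semiprime rings (mentioned at the start of Section \ref{sec:DUALRAD} and in Remark \ref{rem:SEMIPRIME}).

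First I would argue that the hypothesis $d^2=1$ forces $r=1$. Theorem \ref{thm:DUALPROPS} gives $r\leq d^2$, so under the hypothesis we get $r\leq 1$. On the other hand $r$ is extensive, so $1\leq r$. Combining these inequalities in the pointwise order on maps yields $r=1$, which is precisely the condition that every ideal of $R$ equals its own radical, i.e.\ that $R$ is fully semiprime.

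Second, since fully semiprime rings are semiprime (in particular $r(\{0\})=\{0\}$), I would invoke the characterization $d=a$ for semiprime rings noted in Remark \ref{rem:SEMIPRIME} to identify the dualradical map with the annihilator map. At that point the radical-annihilator and radical-dualradical monoids coincide, and both are generated by $1$ and $a=d$ subject to $a^2=d^2=1$ together with the trivialization $r=1$. So the monoid collapses to $\{1,d\}$, with the identity and $d$ forming the two promised disconnected points (they are incomparable in the monoid order because neither $d\leq 1$ nor $1\leq d$ holds once $d=a$ is a nontrivial involution on the ideal poset).

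There is no real obstacle here; the only thing to watch is making sure the interplay of inequalities is carried out in the pointwise order on maps (not a claim about individual ideals), and that we cite Theorem \ref{thm:DUALPROPS} for $r\leq d^2$ and Remark \ref{rem:SEMIPRIME} for the semiprime-implies-$d=a$ step rather than re-deriving them.
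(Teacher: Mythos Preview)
Your proposal is correct and is exactly the argument the paper intends: the corollary is stated without proof precisely because it follows immediately from $r\leq d^2$ (Theorem~\ref{thm:DUALPROPS}) together with extensivity of $r$ to force $r=1$, and then from the semiprime characterization $d=a$ recalled at the start of Section~\ref{sec:DUALRAD}. One small citation adjustment: the direction you need (semiprime $\Rightarrow$ $a=d$) is the content of Lemma~11.40 rather than Remark~\ref{rem:SEMIPRIME}, which records the converse; but the paper bundles both as ``$d=a$ characterizes semiprime rings,'' so the argument stands.
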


\begin{prop}
If the relation $d^2=r$ holds in a radical-dualradical monoid of $R$, then $R$ is zero-dimensional and $d(I)\neq r(\{0\})$ for any proper ideal $I$.
\end{prop}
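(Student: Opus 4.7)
The plan is to prove both conclusions by contradiction from the hypothesis $d^2=r$, handling the second one first and then deducing zero-dimensionality as a corollary, since the latter can be reduced to producing a proper ideal on which $d$ takes the forbidden value $r(\{0\})$.

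For the second conclusion, I would suppose that some proper ideal $I$ satisfies $d(I)=r(\{0\})=:N$. The key observation is that $d(N)=R$: since $N$ lies in every prime of $R$, the set $\{P\text{ prime}:N\not\subseteq P\}$ is empty and $d(N)$ is an empty intersection, hence all of $R$. Then $d^2(I)=d(N)=R$, whereas $r(I)$ is proper because $I$ is contained in some maximal ideal and $r(I)$ sits inside that maximal. This contradicts $d^2=r$.

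For zero-dimensionality, I would argue by contrapositive: if $R$ has a non-maximal prime $P$, then Zorn's lemma furnishes a maximal ideal $M$ with $P\subsetneq M$, and I claim $d(M)=N$, contradicting the second conclusion already established. The argument is a case split on an arbitrary prime $Q$ of $R$. If $M\not\subseteq Q$, then $d(M)\subseteq Q$ directly from the definition of $d(M)$ as an intersection. If $M\subseteq Q$, then $Q=M$ by maximality of $M$, and $d(M)\subseteq M$ follows from the chain $d(M)\subseteq P\subsetneq M$ obtained by noting that $P$ itself belongs to the collection $\{Q' : M\not\subseteq Q'\}$ over which $d(M)$ is formed. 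Thus $d(M)$ lies in every prime, giving $d(M)\subseteq N$; the reverse containment $N\subseteq d(M)$ is automatic.

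The main obstacle is spotting the right ideal to test --- namely $M$ rather than $P$ --- and recognizing that the maximality of $M$ is exactly what kills the only case in which a prime could fail to contain $d(M)$. Without the chain $P\subsetneq M$ the argument would stall, because $d(M)$ would have no reason to be contained in $M$. Once the single identity $d(M)=N$ is in hand, both parts of the theorem fall out of the computation $d(N)=R$ combined with the relation $d^2=r$.
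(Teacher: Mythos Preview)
Your argument is correct and essentially coincides with the paper's. The paper proves the two conclusions in the stated order---it takes a maximal, nonminimal prime $M$, observes that $d(M)=r(\{0\})$ because every minimal prime lies in $h^c(M)$, and then notes $d^2(M)=R\neq M=r(M)$; the second conclusion is then handled exactly as you do. Your version simply swaps the order, proving the second conclusion first and then invoking it with the witness $M$, but the underlying computation $d(M)=r(\{0\})$ for a maximal ideal strictly above some prime is identical.
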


\begin{proof}
Suppose $d^2=r$ and $M$ is a maximal and nonminimal prime ideal. Then $d(M)=r(\{0\})$ since no minimal prime ideal contains $M$. It would follow that $d^2(M)=R\neq M$. So we see that $d^2=r$ necessitates that every maximal prime is also a minimal prime. Continuing with $I\lhd R$, if $d(I)=r(\{0\})$, then $d^2(I)=R$, but since $I$ is proper, $r(I)\neq R$. 
\end{proof}

\begin{example}
Of course, any semiprime ring which has a maximal radical-annihilator monoid automatically has a maximal radical-dualradical monoid. This is an example of a ring which has a maximal size radical-dualradical monoid but is not semiprime. Let $R=\mathbb Z (+) \mathbb Z$ be the trivial extension of $\mathbb Z$ by $\mathbb Z$. The operations are $(r, m)+(s, n)=(r+s, m+n)$ and $(r,m)(s,n)=(rs, rn+ms)$, just as in the construction in Example \ref{ex:3}. Let $I=(4)(+)\mathbb Z$. Then $d(I)=0(+)\mathbb Z$ and $d^2(I)=R$ and $r(I)=(2)(+)\mathbb Z$. Since $R$ has nonzero nilpotent elements (all of $0(+)\mathbb Z$), the ring is not semiprime or fully semiprime.
\end{example}

\section{Future work}
Several questions suggested by the foregoing work are gathered here.

\begin{qn} Is there an example of a ring with an infinite radical-annihilator monoid?
\end{qn}

\begin{qn}Is there a noncommutative ring satisfying $a^2=1$ whose radical-annihilator monoid matches the full Kuratowski monoid?\end{qn}

The class of commutative dual rings provided monoids that were most parallel with the original Kuratowski monoid. Surprisingly, there appeared a further relation beyond the special one arising from the order-reversing and order-preserving properties of the maps. If there is any hope of finding a ring satisfying $a^2=1$ that has a radical-annihilator monoid of size $14$, then it must be noncommutative.

\begin{qn}Can any conclusions be drawn about the monoids of commutative local rings?\end{qn}

If a classification exists for commutative local rings, then via the product theorem we have completely understood semiperfect rings. This would necessarily subsume the foregoing results on dual rings and local zero-dimensional rings.

\begin{qn}Can any conclusions be drawn about the monoids of rings whose prime ideals are linearly ordered?\end{qn}

Rings in which the prime ideals are linearly ordered are sometimes called \textbf{pseudo-valuation rings}. Because the radical map would be relatively simply behaved, it may be possible to generalize what this article has presented on zero-dimensional rings. In a pseudo-valuation ring of finite Krull dimension, each application of the radical map takes the ideals into a finite linearly ordered set, and one would like to believe that the associated monoid might be finite.

It is also possible to exchange the prime radical for other radicals. An idea along these lines is to use the Jacobson radical of an ideal in place of the radical of an ideal. This would seem to yield a ``coarser'' monoid since the resulting map dominates the radical map we have used in this article.

\section*{Acknowledgments}
The author wishes to thank the referee for his careful reading and numerous helpful recommendations. Thanks also to Greg Oman and Manfred von Willich for their time spent reading and giving feedback. Finally, thanks to Keith A. Kearnes for bringing Lemma \ref{lem:KEARNES} to light.

\bibliographystyle{ieeetr}
\bibliography{./Masterbib}

\begin{thebibliography}{10}

\bibitem{KURA22}
K.~Kuratowski, ``Sur l'op\'eration \={A} de l'analysis situs,'' {\em Fund.
  Math.}, vol.~3, pp.~182--199, 1922.

\bibitem{GardJack08}
B.~J. Gardner and M.~Jackson, ``The {K}uratowski closure-complement theorem,''
  {\em New Zealand J. Math.}, vol.~38, pp.~9--44, 2008.

\bibitem{Sherman_2010}
D.~Sherman, ``Variations on {K}uratowski's 14-set theorem,'' {\em Amer. Math.
  Monthly}, vol.~117, pp.~113--123, February 2010.

\bibitem{Jackson_2004}
M.~Jackson, ``Semilattices with closure,'' {\em Algebra {U}nivers.}, vol.~52,
  pp.~1--37, November 2004.

\bibitem{Jackson_Stokes_2004}
M.~Jackson and T.~Stokes, ``Semilattice pseudo-complements on semigroups,''
  {\em Communications in Algebra}, vol.~32, pp.~2895--2918, December 2004.

\bibitem{Soltan_1982}
V.~P. Soltan, ``Problems of {K}uratowski type,'' {\em Mat. Issled}, vol.~65,
  pp.~121--131 and 155, 1982.

\bibitem{Yu_1978}
C.~Y. Yu, ``Solution to problem 5996,'' {\em Amer. Math. Monthly}, vol.~85,
  pp.~283--284, 1978.

\bibitem{Zarycki_1927}
M.~Zarycki, ``Quelques notions fondamentales de l'analysis situs au point de
  vue de l'alg\`{e}bre de la logique,'' {\em Fund. Math.}, vol.~9, pp.~3--15,
  1927.

\bibitem{Hammer_1960}
P.~C. Hammer, ``Kuratowski's closure theorem,'' {\em Nieuw Arch. Wisk.},
  vol.~8, no.~3, pp.~74--80, 1960.

\bibitem{Coleman_1967}
P.~J. Coleman, ``On the composition semigroup generated by complement and an
  expansive function,'' {\em Nieuw Arch. Wisk.}, vol.~15, pp.~207–--210,
  1967.

\bibitem{Shum_1996}
K.-P. Shum, ``Closure functions on the set of positive integers,'' {\em Sci.
  China Ser. A}, vol.~39, pp.~337--346, 1996.

\bibitem{Soltan_1981}
V.~P. Soltan, ``On {K}uratowski's problem,'' {\em Bull. Acad. Polon. Sci.
  S\`er. Sci. Math.}, vol.~28, p.~369–375, 1981.

\bibitem{Shallit_2011}
J.~Shallit and R.~Willard, ``Kuratowski's theorem for two closure operators,''
  {\em arXiv preprint arXiv:1109.1227}, 2011.

\bibitem{Brzozowski_2011}
J.~Brzozowski, E.~Grant, and J.~Shallit, ``Closures in formal languages and
  kuratowski's theorem,'' {\em International Journal of Foundations of Computer
  Science}, vol.~22, no.~02, pp.~301--321, 2011.

\bibitem{Peleg_1984}
D.~Peleg, ``A generalized closure and complement phenomenon,'' {\em Discrete
  Mathematics}, vol.~50, pp.~285--293, 1984.

\bibitem{Fishburn_1978}
P.~C. Fishburn, ``Operations on binary relations,'' {\em Discrete Mathematics},
  vol.~21, no.~1, pp.~7--22, 1978.

\bibitem{Graham_1972}
R.~Graham, D.~Knuth, and T.~Motzkin, ``Complements and transitive closures,''
  {\em Discrete Mathematics}, vol.~2, pp.~17--29, March 1972.

\bibitem{Anusiak_1971}
J.~Anusiak and K.~Shum, ``Remarks on finite topological spaces,'' in {\em
  Colloquium Mathematicum}, vol.~2, pp.~217--223, 1971.

\bibitem{Comer_1972}
S.~Comer and J.~Johnson, ``The standard semigroup of operators of a variety,''
  {\em Algebra Univ.}, vol.~2, pp.~77--79, December 1972.

\bibitem{Nelson_1967}
E.~Nelson, ``Finiteness of semigroups of operators in universal algebra,'' {\em
  Canad. J. Math.}, vol.~19, pp.~764--768, January 1967.

\bibitem{Pigozzi_1972}
D.~Pigozzi, ``On some operations on classes of algebras,'' {\em Algebra Univ.},
  vol.~2, pp.~346--353, December 1972.

\bibitem{Cornucopia}
M.~Bowron, ``Kuratowski's closure-complement cornucopia,'' 2015.
\newblock http://www.mathtransit.com/cornucopia.php Accessed May 28, 2015.

\bibitem{LAMLMR}
T.~Y. Lam, {\em Lectures on modules and rings}, vol.~189 of {\em Graduate Texts
  in Mathematics}.
\newblock New York: Springer-Verlag, 1999.

\bibitem{NYQFR}
W.~K. Nicholson and M.~F. Yousif, {\em Quasi-{F}robenius rings}, vol.~158 of
  {\em Cambridge Tracts in Mathematics}.
\newblock Cambridge: Cambridge University Press, 2003.

\bibitem{pomonoidpy}
R.~C. Schwiebert, ``Pomonoid: A companion to \textit{{T}he radical-annihilator
  monoid of a ring},'' 2016.
\newblock http://dx.doi.org/10.5281/zenodo.51558 Accessed May 28, 2016.

\bibitem{LAMFC}
T.~Y. Lam, {\em A first course in noncommutative rings}, vol.~131 of {\em
  Graduate Texts in Mathematics}.
\newblock New York: Springer-Verlag, second~ed., 2001.

\bibitem{StenstromROQ}
B.~Stenstr{\"o}m, {\em Rings of quotients: An introduction to methods of ring
  theory}, vol.~217.
\newblock New York: Springer Science \& Business Media, 2012.

\bibitem{GDRLVNR}
K.~R. Goodearl, {\em von {N}eumann regular rings}.
\newblock Malabar, FL: Pitman, first~ed., 1979.

\bibitem{Henriksen1990}
M.~Henriksen, ``Spaces of prime ideals,'' in {\em Proc. Sorrento Conf. on
  Topology, Suppl. Rend. Circolo, Mat Palermo}, vol.~24, (Sorento, Italy.
  1988), pp.~128--144, 1990.

\bibitem{Birken_2015}
G.~F. Birkenmeier, M.~Ghirati, and A.~Taherifar, ``When is a sum of annihilator
  ideals an annihilator ideal?,'' {\em Communications in Algebra}, vol.~43,
  no.~7, pp.~2690--2702, 2015.

\end{thebibliography}

\end{document}